\date{}
\newtheorem{theorem}{Theorem}[section]
\newtheorem{lemma}[theorem]{Lemma}
\newtheorem{proposition}[theorem]{Proposition}
\theoremstyle{remark}
\newtheorem{remark}[theorem]{Remark}
\newcommand{\parder}[3][Default]{
	\frac{\partial \ifthenelse{\equal{#1}{Default}}{}{^{#1}}#2}{
              \partial #3 \ifthenelse{\equal{#1}{Default}}{}{^{#1}}}}
\numberwithin{equation}{section}
\newcommand{\nolisttopbreak}{\vspace{\topsep}\nobreak\@afterheading}
\newcommand{\jac}{{\mathcal{J}}}
\newcommand{\Mat}{\operatorname{Mat}}
\newcommand{\GL}{\operatorname{GL}}
\newcommand{\rk}{\operatorname{rk}}
\newcommand{\Z}{\mathbb{Z}}
\newcommand{\tr}{\operatorname{tr}}
\newcommand{\trdeg}{\operatorname{trdeg}}
\newcommand{\chr}{\operatorname{char}}
\begin{document}

\title{Classification of cubic homogeneous polynomial maps with Jacobian matrices of rank two}

\author{Michiel de Bondt\footnote{M.deBondt@math.ru.nl}}
\affil{\small Institute for Mathematics, Astrophysics and Particle Physics,
Radboud University Nijmegen, The Netherlands}
\author{Xiaosong Sun\footnote{Corresponding author, E-mail: sunxs@jlu.edu.cn}}
\affil{\small School of Mathematics, Jilin University, Changchun 130012, China}

\maketitle

\begin{abstract}
Let $K$ be any field with $\textup{char}K\neq 2,3$. We classify all cubic homogeneous polynomial maps $H$ over $K$ with $\textup{rk} JH\leq 2$. In particular, we show that, for such an $H$, if $F=x+H$ is a Keller map then $F$ is invertible, and furthermore $F$ is tame if the dimension $n\neq 4$.
\end{abstract}

\section{Introduction}

Let $K$ be an arbitrary field and $K[x]:=K[x_1,x_2,\ldots,x_n]$  the polynomial ring in $n$ variables.
For a polynomial map $F=(F_1,F_2,\ldots,F_m)\in K[x]^m$, we denote by
$\jac F:=(\frac{\partial F_i}{\partial x_j})_{m\times n}$ the Jacobian matrix of $F$ and $\deg F:=\max_i \deg F_i$ the degree of $F$.  A polynomial map $H\in K[x]^m$ is called homogeneous of degree $d$ if each $H_i$ is zero or homogeneous of degree $d$.

A polynomial map $F\in K[x]^n$ is called a Keller map if $\det \jac F\in K^*$. The Jacobian conjecture asserts that any Keller map is invertible if $\textup{char} K=0$; see \cite{essen2000} or \cite{bass1982}.
It is still open for any dimension $n\geq 2$.

Following \cite{shestakov2}, we call a polynomial automorphism elementary if 
it is of the form $(x_1,\ldots,x_{i-1},cx_i+a,x_{i+1},\ldots,x_n)$,
where $c\in K^*$ and $a\in K[x]$ contains no $x_i$. Furthermore, we call
a polynomial automorphism tame if it is a finite composition of elementary ones. 
The definitions of elementary and tame may be different in other sources, but
(as long as $K$ is a generalized Euclidean ring) the definitions of tame are 
equivalent. The Tame Generators Problem asks if every
polynomial automorphism is tame. It has an affirmative
answer in dimension 2 for arbitrary characteristic (see \cite{jung, kulk}) and
a negative answer in dimension 3 for the case of $\textup{char} K=0$ (see \cite{shestakov2}), and is still
open for any $n\geq 4$.

A polynomial map $F=x+H\in K[x]^n$ is called triangular if $H_n\in K$ and $H_i\in
K[x_{i+1},\ldots,x_n],$ $1\leq i \leq n-1$. A
polynomial map $F$ is called linearly triangularizable if it is
linearly conjugate to a triangular map, i.e., there exists an
invertible linear map $T\in \GL_n(K)$ such that $T^{-1}F(Tx)$ is triangular. A
linearly triangularizable map is tame.

Some special polynomial maps have been investigated in the literature. For example, when $\textup{char}K=0$, a Keller map $F=x+H\in K[x]^n$ is shown to be linearly triangularizable in the cases: (1) $n=3$ and $H$ is homogeneous of arbitrary degree $d$ (de Bondt and van den Essen \cite{bondt05}); (2) $n=4$ and $H$ is quadratic homogeneous (Meisters and Olech \cite{mei91}); (3) $n=9$ and $F$ is a quadratic homogeneous quasi-translation (Sun \cite{sun10}); (4) $n$ arbitrary and $H$ is quadratic with $\rk \jac H\leq 2$ (De Bondt and Yan \cite{bondt-yan}),
and to be tame in the case (5) $n=5$ and $H$ is quadratic homogeneous (de Bondt \cite{bondt09} and Sun \cite{sun} independently), and to be invertible in the case (6) $n=4$ and $H$ is cubic homogeneous (Hubbers \cite{hub94}).
For the case of arbitrary characteristic, de Bondt \cite{bondt17} described the Jacobian matrix $\jac H$ of rank two for any quadratic polynomial map $H$ and showed that if $\jac H$ is nilpotent then $\jac H$ is similar to a triangular one.

In this paper, we investigate cubic homogeneous polynomial maps $H$ with $\textup{rk} \jac H\leq 2$ for any dimension $n$ when $\textup{char}K\neq 2, 3$. In Section 2, we classify all such maps (Theorem \ref{rkle2}). And in Section 3, we show that for such an $H$, if $F=x+H$ is a Keller map, then it is invertible and furthermore it is tame if the dimension $n\neq 4$ (Theorem \ref{uporkle2}).

\section{Cubic homogeneous maps $H$ with $\textup{rk} JH\leq 2$}

For a polynomial map $H\in K[x]^m$, we write $\trdeg_K K(H)$ for the transcendence degree of $K(H)$ over $K$.
It is well-known that $\rk \jac H = \trdeg_K K(H)$~ if $K(H) \subseteq K(x)$ is separable, in particular if
$\textup{char}K=0$; see \cite[Proposition 1.2.9]{essen2000}. And
for arbitrary characteristic, one has $\rk \jac H \leq \trdeg_K K(H)$; see \cite{bondt15} or  \cite{pss}.

It was shown in \cite{bondt17} that when $\textup{char}K\neq 2$, for any quadratic polynomial map $H$ with $\rk \jac H\leq 2$, one has $\rk \jac H = \trdeg_K K(H)$. We will show that when $\textup{char}K\neq 2,3$, for any cubic homogeneous polynomial map $H$ with $\rk \jac H\leq 2$, one has $\rk \jac H = \trdeg_K K(H)$. The notation $a|_{x=c}$ below means to substitute $x$ by $c$ in $a$.

\begin{theorem} \label{detdep}
Let $s \le n$. Take
$$
\tilde{x} := (x_1,x_2,\ldots,x_s) ~~ \mbox{and} ~~
L := K(x_{s+1},x_{s+2},\ldots,x_n)\mbox{.}
$$
To prove that for (homogeneous) polynomial maps
$H \in K[x]^m$ of degree $d$,
\begin{equation} \label{eq1}
\rk \jac H = r ~~\mbox{implies } \trdeg_K K(H) = r,~
\mbox{~for every } r < s\mbox{,}
\end{equation}
it suffices to show that for (homogeneous) polynomial maps
$\tilde{H} \in L[\tilde{x}]^s$ of degree $d$,
\begin{equation} \label{eq2}\textup{trdeg}_LL(\widetilde{H})=s ~~ \mbox{implies}~~
\rk \jac_{\tilde{x}} \tilde{H}=s\mbox{.}
\end{equation}
\end{theorem}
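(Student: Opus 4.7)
The plan is to proceed by contrapositive. Assume \eqref{eq2} holds for every degree-$d$ map $\tilde H \in L[\tilde x]^s$, and suppose \eqref{eq1} fails, so there exist $r < s$ and a (homogeneous) degree-$d$ map $H \in K[x]^m$ with $\rk \jac H = r$ and $\trdeg_K K(H) > r$. The aim is to manufacture from $H$ a witness $\tilde H \in L[\tilde x]^s$ that violates \eqref{eq2}, namely one with $\trdeg_L L(\tilde H) = s$ but $\rk \jac_{\tilde x} \tilde H < s$.

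Set $t := \min\{\trdeg_K K(H),\, s\}$, so that $r < t \leq s$. After reordering the components of $H$, the polynomials $H_1, \ldots, H_t$ may be assumed algebraically independent over $K$. By the exchange property for transcendence bases (applied inside $K(x)/K$, whose transcendence degree is $n$), the set $\{H_1, \ldots, H_t\}$ extends to a transcendence basis of $K(x)/K$ by adjoining $n - t$ of the variables $x_j$; permuting the $x_j$'s so these extra variables are precisely $x_{t+1}, \ldots, x_n$ is harmless, since \eqref{eq2} is symmetric under permutations of $x_1, \ldots, x_n$. Now define
\[
\tilde H := \bigl(H_1, \ldots, H_t,\; x_{t+1}^d,\, x_{t+2}^d, \ldots,\, x_s^d\bigr) \in L[\tilde x]^s,
\]
which has degree $d$ and is homogeneous when $H$ is.

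For the verification, the field $L(\tilde H) = K(x_{s+1}, \ldots, x_n,\, H_1, \ldots, H_t,\, x_{t+1}^d, \ldots, x_s^d)$ is algebraic over $K(x_{s+1}, \ldots, x_n,\, H_1, \ldots, H_t,\, x_{t+1}, \ldots, x_s)$, which by the chosen basis has transcendence degree $n$ over $K$; hence $\trdeg_L L(\tilde H) = n - (n-s) = s$. On the other hand, $\jac_{\tilde x} \tilde H$ is block upper triangular, with top-left $t \times t$ block a submatrix of $\jac H$ (of rank $\leq r$) and bottom-right $(s-t) \times (s-t)$ block the invertible diagonal matrix $\operatorname{diag}(d\, x_{t+1}^{d-1}, \ldots, d\, x_s^{d-1})$, so $\rk \jac_{\tilde x} \tilde H \leq r + (s-t) < s$ because $r < t$. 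This contradicts \eqref{eq2} and completes the reduction.

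The only step beyond bookkeeping is the padding trick with the monomials $x_i^d$: they raise the transcendence degree to exactly $s$ while inflating the Jacobian rank by exactly $s - t$, so that the gap $r < t$ translates cleanly into the required gap $\rk \jac_{\tilde x} \tilde H < s$. Everything else reduces to the extension of an algebraically independent set to a transcendence basis and a relabeling of variables.
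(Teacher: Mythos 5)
Your reduction proves only the non-homogeneous reading of the theorem, and the parenthetical ``(homogeneous)'' is exactly where the real content lies. Your witness $\tilde H=(H_1,\ldots,H_t,x_{t+1}^d,\ldots,x_s^d)$ is obtained by reading the $H_i$ --- forms of degree $d$ in all $n$ variables --- as elements of $L[\tilde x]$, and as such they are in general neither homogeneous in $\tilde x$ nor of $\tilde x$-degree $d$: for instance $H_1=x_1^2x_{s+1}+x_2^3$ becomes $x_{s+1}x_1^2+x_2^3$, which is inhomogeneous in $\tilde x$, and $H_1=x_1x_{s+1}^2$ has $\tilde x$-degree $1$. So in the homogeneous version of the statement, hypothesis \eqref{eq2} --- which you may only invoke for homogeneous $\tilde H\in L[\tilde x]^s$ of degree $d$ --- does not apply to your $\tilde H$, and no contradiction is obtained. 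That homogeneous version is the one the paper actually uses: Theorem \ref{eqthm} feeds cubic homogeneous $H$ into Theorem \ref{detdep} and then exploits homogeneity of the resulting three-variable map throughout. The idea you are missing is the paper's substitution $G=(x_1,\ldots,x_s,x_1x_{s+1},\ldots,x_1x_n)$: after the same padding step you perform (reducing to $H\in K[x]^s$ with $\rk\jac H<\trdeg_KK(H)=s$), one sets $\tilde H:=H\circ G$, which is homogeneous of degree exactly $d$ in $\tilde x$ over $L$ (each substituted variable $x_1x_i$ has $\tilde x$-degree $1$ with coefficient $x_i\in L$), still satisfies $\trdeg_LL(\tilde H)=s$ (the substitution is birational, combined with a Steinitz--Mac\ Lane exchange to arrange that the complementary variables are $x_{s+1},\ldots,x_n$), and satisfies $\rk\jac_{\tilde x}\tilde H\le\rk\jac H<s$ by the chain rule.

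Two smaller points. Even in the non-homogeneous reading, when $t=s$ your map has no padding components and every component may have $\tilde x$-degree strictly less than $d$, so \eqref{eq2} ``of degree $d$'' again need not apply literally. And the bottom-right block $\operatorname{diag}(dx_{t+1}^{d-1},\ldots,dx_s^{d-1})$ is not invertible when $\chr K$ divides $d$; this is harmless, since you only need the bound $\rk\jac_{\tilde x}\tilde H\le r+(s-t)$, which holds by subadditivity of rank over the two row blocks. Up to these caveats, your padding step coincides with the paper's first step; what is absent is the second, homogenizing substitution, which is the essential point of the proof.
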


\begin{proof}
Suppose that $H \in K[x]^m$ is (homogeneous) of degree $d$, such that
\eqref{eq1} does not hold. Then there exists an $r < s$ such that
$\rk \jac H = r < \trdeg_K K(H)$. We need to show that \eqref{eq2} does not hold.

Let $s'= \trdeg_K K(H)$. Assume without
loss of generality that $H_1, H_2, \ldots, H_{s'}$ are
algebraically independent over $K$, and that
the components of
$$
H' := \big(H_1, H_2, \ldots, H_{s'},
      x_{s'+1}^d, x_{s'+2}^d,\ldots,x_s^d\big)
$$
are algebraically independent over $K$ if $s'< s$. Then
$$
\rk \jac H' \le r + (s - s') < s = \trdeg_K K(H')\mbox{.}
$$

For the case of $s'\geq s$, just take $H'=(H_1,H_2,\ldots,H_s)$, and we have also $
\rk \jac H' \le r < s.
$

Notice that \eqref{eq1} is also unsatisfied for $H'$. So, replacing
$H$ by $H'$, we may assume that $H\in K[x]^s$ with $\rk \jac H=r<\trdeg_K K(H) = s$.

One may observe that
$H_1(x_1,x_1x_2,x_1x_3,\ldots,x_1x_n)$ is algebraically
independent over $K$ of $x_2,x_3,\allowbreak\ldots,x_n$.
On account of the Steinitz Mac Lane exchange lemma, we may assume without
loss of generality that the components of
$$
\big(H(x_1,x_1x_2,x_1x_3,\ldots,x_1x_n),x_{s+1},x_{s+2},\ldots,x_n\big)
$$
are algebraically independent over $K$. Then the components of
$H(x_1,x_1x_2,x_1x_3,$ $\ldots,x_1x_n)$ are algebraically independent over $L:= K(x_{s+1},x_{s+2},\ldots,x_n)$, and so are the components of
$$
\tilde{H} := H(x_1,x_2,\ldots,x_s,
x_1x_{s+1},x_1x_{s+2},\ldots,x_1x_n)\in L[\widetilde{x}]^s\mbox{,}
$$
where $\widetilde{x}=(x_1,x_2,\ldots,x_s)$.
That is, $\textup{trdeg}_LL(\widetilde{H})=s$.

Let $G := (x_1,x_2,\ldots,x_s, x_1x_{s+1},x_1x_{s+2},\ldots,x_1x_n)$.
Then it follows from the chain rule that
$$
\jac_{\tilde{x}} \tilde{H} = (\jac H)|_{x = G} \cdot \jac_{\tilde{x}} G\mbox{,}
$$
so $\rk \jac_{\tilde{x}} \tilde{H}\leq \rk (\jac H)|_{x = G}\leq \rk\jac H<s$. Therefore \eqref{eq2} does not hold for $\widetilde{H}$, which
completes the proof.
\end{proof}


\begin{lemma} \label{rkform}
Let $H \in K[x]^m$ be a polynomial map of degree $d$ and $r := \rk \jac H$. Denote by $|K|$ the cardinality of $K$.
\begin{enumerate}[\upshape (i)]

\item If $|K|>(d-1)r$ and $\jac H \cdot x = 0$,
then there exist $S \in \GL_m(K)$ and $T \in \GL_n(K)$,
such that for $\tilde{H} := SH(Tx)$,
$$
\tilde{H}|_{x=e_{r+1}} = \left(\begin{array}{cc}
I_r & 0 \\ 0 & 0
\end{array} \right)\mbox{.}
$$

\item If $|K|>(d-1)r + 1$ and $\jac H \cdot x \ne 0$,
then there exist $S \in \GL_m(K)$ and $T \in \GL_n(K)$,
such that for $\tilde{H} := SH(Tx)$,
$$
\tilde{H}|_{x=e_1} = \left( \begin{array}{cc}
I_r & 0 \\ 0 & 0
\end{array} \right)\mbox{.}
$$

\end{enumerate}
Moreover, $|K|$ may be one less (i.e. at least
$(d-1)r$ and $(d-1)r + 1$ respectively) if every nonzero component of
$H$ is homogeneous.
\end{lemma}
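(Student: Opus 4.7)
The plan is to (a) produce, via a standard degree-counting argument, a point $a\in K^n$ at which $\jac H$ already has rank $r$ (and, in case~(ii), also satisfies $\jac H(a)\cdot a\neq 0$); (b) use a linear change of variables $T\in\GL_n(K)$ to move $a$ to the required standard basis vector while simultaneously arranging the column structure of $\jac H(a)\cdot T$; and (c) choose $S\in\GL_m(K)$ to perform a final row reduction to the advertised normal form. Throughout I will rely on the standard fact that a nonzero $f\in K[x_1,\ldots,x_n]$ of total degree at most $D$ admits an evaluation $f(a)\neq 0$ whenever $|K|>D$, with the sharper bound $|K|\geq D$ sufficing when $f$ is homogeneous of positive degree (proved by induction on $n$). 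This refinement is exactly what yields the ``moreover'' clause.

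For the point $a$, in case~(i) it suffices to take $f$ to be any nonzero $r\times r$ minor of $\jac H$: it has total degree at most $(d-1)r$ and is homogeneous if every nonzero component of $H$ is homogeneous, so a point $a$ with $f(a)\neq 0$ gives $\rk\jac H(a)=r$, while $\jac H\cdot x=0$ makes $\jac H(a)\cdot a=0$ automatic. The crux of the proof is case~(ii): the naive choice of the product of a minor with a nonzero component of $v:=\jac H\cdot x$ has degree $(d-1)r+d$, far too large. The key observation is that $v$ already lies in the $K(x)$-column span of $\jac H$ (a subspace of dimension $r$), so $\{v\}$ can be extended to a basis of that span by adjoining $r-1$ columns $c_{j_1},\ldots,c_{j_{r-1}}$ of $\jac H$. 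I would then take $f$ to be a nonzero $r\times r$ minor of the $m\times r$ matrix $(c_{j_1}\mid\cdots\mid c_{j_{r-1}}\mid v)$. Since the $c_{j_k}$ have entries of degree $\leq d-1$ while $v$ has degree $\leq d$, any such minor has degree at most $(d-1)(r-1)+d=(d-1)r+1$, homogeneous in the homogeneous sub-case. A point $a$ with $f(a)\neq 0$ then simultaneously witnesses $v(a)=\jac H(a)\cdot a\neq 0$ and $\rk\jac H(a)=r$, because the $r$ evaluated columns of the matrix are linearly independent.

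Given such an $a$, I would construct a basis $b_1,\ldots,b_n$ of $K^n$ as follows. In case~(i), place $b_{r+1}:=a$ (which lies in $\ker\jac H(a)$), pick $b_1,\ldots,b_r$ whose images under $\jac H(a)$ form a basis of $\operatorname{Im}\jac H(a)$, and complete with kernel vectors $b_{r+2},\ldots,b_n$. In case~(ii), set $b_1:=a$ and $b_{1+k}:=e_{j_k}$ for $k=1,\ldots,r-1$, so that $\jac H(a)b_1,\ldots,\jac H(a)b_r$ equal $v(a),c_{j_1}(a),\ldots,c_{j_{r-1}}(a)$, and complete to a basis with $b_{r+1},\ldots,b_n\in\ker\jac H(a)$. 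In both cases the linear independence of $\jac H(a)b_1,\ldots,\jac H(a)b_r$ forces $b_1,\ldots,b_r$ to be linearly independent in $K^n$ and makes $\mathrm{span}(b_1,\ldots,b_r)\cap\ker\jac H(a)=\{0\}$, so the kernel (of dimension $n-r$) completes the basis freely. Letting $T\in\GL_n(K)$ be the matrix with columns $b_1,\ldots,b_n$, the matrix $\jac H(a)\cdot T$ has $r$ linearly independent first columns and zero last columns; picking $S\in\GL_m(K)$ that sends those $r$ vectors to $e_1,\ldots,e_r\in K^m$, the chain rule gives
\[
\jac(SH(Tx))\big|_{x=e_k} \;=\; S\cdot \jac H(a)\cdot T \;=\; \left(\begin{array}{cc} I_r & 0 \\ 0 & 0 \end{array}\right),
\]
with $k=r+1$ in case~(i) and $k=1$ in case~(ii), as required.
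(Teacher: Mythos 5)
Your proposal is correct and follows essentially the same route as the paper: the decisive step — using the column $\jac H\cdot x$ together with $r-1$ columns of $\jac H$ to form a nonzero $r\times r$ minor of degree at most $(d-1)r+1$ — is exactly the paper's polynomial $a_1$, and the subsequent construction of $T$ from the evaluation point plus kernel vectors and of $S$ as a row reduction matches the paper's proof. The only difference is cosmetic: you argue the existence of a non-vanishing point (and its homogeneous refinement giving the ``moreover'' clause) directly by induction, where the paper cites \cite[Lemma 5.1]{bondt13}.
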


\begin{proof}
(i) Assume without loss of generality that
$$
a_0:= \det \jac_{x_1,x_2,\ldots,x_r} (H_1,H_2,\ldots,H_r) \ne 0\mbox{.}
$$
Suppose that $|K|>(d-1)r$.
It follows by \cite[Lemma 5.1 (i)]{bondt13} that there exists a
vector $w \in K^n$ such that $a_0(w) \ne 0$.
So $\rk \big(\jac H\big)\big|_{x=w} = r$.
There exist $n-r$
independent vectors 
$v_{r+1}, v_{r+2}, \ldots, \allowbreak v_n \in K^n$,
such that
$
\big(\jac H\big)\big|_{x=w} \cdot v_i = 0
$
for $i = r+1, r+2, \allowbreak \ldots, n$. And we may take $v_{r+1} = w$ since
$$
\big(\jac H\big)\big|_{x=w} \cdot w = \big(\jac H \cdot x\big)\big|_{x=w} = 0\mbox{.}
$$
Take $T =(v_1,v_2,\cdots,v_n)\in \GL_n(K)$.
From the chain rule, we deduce that
$$
\big(\jac (H(Tx))\big)\big|_{x=e_{r+1}} \cdot e_i
= (\jac H)|_{x=Te_{r+1}} \cdot T e_i = (\jac H)|_{x=w} \cdot v_i
\quad (1\leq i\leq n)\mbox{.}
$$
In particular,
$\rk \jac (H(Tx))\big|_{x=e_{r+1}} = r$ and the last
$n - r$ columns of $\big(\jac (H(Tx))\big)\big|_{x=e_{r+1}}$ are zero.
There exists $S \in \GL_m(K)$ such that
$$
\big(\jac (SH(Tx))\big)\big|_{x=e_{r+1}} =
S \cdot \big(\jac (H(Tx))\big)\big|_{x=e_{r+1}} =
\left( \begin{array}{cc}
I_r & 0\\ 0 & 0
\end{array} \right)\mbox{.}
$$

(2) Suppose that $|K|>(d-1)r + 1$.
Since $\jac H \cdot x \ne 0$, we may assume that
$$
\rk \Big( \jac H \cdot x,
\jac_{x_2,x_3,\ldots,x_r} H \Big) = r\mbox{,}
$$
and that
$$
a_1 := \det \Big( \jac (H_1,H_2,\ldots,H_r) \cdot x,
\jac_{x_2,x_3,\ldots,x_r} (H_1,H_2,\ldots,H_r) \Big) \ne 0\mbox{.}
$$
It follows by \cite[Lemma 5.1 (i)]{bondt13} that there exists $w \in K^n$ such that $a_1(w) \ne 0$.
One may observe that $\rk \big(\jac H\big)\big|_{x=w} = r$
and thus there exist independent vectors $v_{r+1}, v_{r+2}, \ldots, v_n \in K^n$,
such that $\big(\jac H\big)\big|_{x=w} \cdot v_i = 0$
for $i = r+1, \allowbreak r+2, \ldots, n$. Since
$\big(\jac H \cdot x\big)\big|_{x=w}$ is the first column of a full column rank matrix, we have
$$
\big(\jac H\big)\big|_{x=w} \cdot w = \big(\jac H \cdot x\big)\big|_{x=w} \ne 0\mbox{.}
$$
So $v_1 := w$ is independent of $v_{r+1}, \allowbreak v_{r+2}, \ldots, v_n$.

Take $T =(v_1,v_2,\cdots,v_n)\in \GL_n(K)$. Then
$$
\big(\jac (H(Tx))\big)\big|_{x=e_1} \cdot e_i
= (\jac H)|_{x=Te_1} \cdot T e_i = (\jac H)|_{x=w} \cdot v_i
\quad (1\leq i\leq n).
$$
The rest of the proof of (ii) is similar to
that of (i).

The last claim follows from \cite[Lemma 5.1 (ii)]{bondt13}, as an
improvement to \cite[Lemma 5.1 (i)]{bondt13}.
\end{proof}

\begin{proposition} \label{propA}
Assume that $\textup{char}K\notin \{1,2,\ldots,d\}$. Then for any cubic homogeneous polynomial map $H\in K[x]^m$ of degree $d$ with $\rk \jac H\leq 1$, the components of $H$ are linearly 
dependent over $K$ in pairs, and one has $\rk \jac H = \trdeg_K K(H)$.
\end{proposition}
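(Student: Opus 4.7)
The key tool is Euler's identity $\jac H \cdot x = d \cdot H$, which holds because the hypothesis $\chr K \notin \{1,2,\ldots,d\}$ ensures $\chr K \nmid d$. I would first dispose of the degenerate case $\rk \jac H = 0$: then every $H_i$ has vanishing gradient, so $H_i \in K$ in characteristic $0$ and $H_i \in K[x_1^p,\ldots,x_n^p]$ in characteristic $p > d$. In the latter case, a homogeneous polynomial of degree $d < p$ lying in $K[x^p]$ must itself be a scalar, and if $d \ge 1$ it must vanish. Either way the components are pairwise proportional and $\trdeg_K K(H) = 0$.

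For $\rk \jac H = 1$, I would argue pairwise. Fix indices $i, j$ with $\nabla H_i \ne 0$; the rank-one hypothesis applied to the $2 \times n$ submatrix $\jac(H_i, H_j)$ gives a unique $\alpha \in K(x)$ with $\nabla H_j = \alpha\,\nabla H_i$. Applying Euler to $H_j$ then yields
$$
H_j \;=\; \tfrac{1}{d}\,x\tp\,\nabla H_j \;=\; \tfrac{\alpha}{d}\,x\tp\,\nabla H_i \;=\; \alpha H_i,
$$
so $\alpha = H_j/H_i$ as an element of $K(x)$. A one-line quotient-rule calculation now gives $\nabla \alpha = (H_i\,\alpha\,\nabla H_i - \alpha H_i\,\nabla H_i)/H_i^2 = 0$.

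The main obstacle, and the one place where the hypothesis $\chr K > d$ is genuinely used, is upgrading $\nabla \alpha = 0$ to $\alpha \in K$. Writing $\alpha = f/g$ in lowest terms in $K[x]$, the identity $g\,\partial f/\partial x_k = f\,\partial g/\partial x_k$ together with $\gcd(f,g) = 1$ forces $\partial f/\partial x_k = \partial g/\partial x_k = 0$ for every $k$ (otherwise $g$ would divide a nonzero polynomial of strictly smaller $x_k$-degree). In characteristic $0$ this already gives $f, g \in K$. In characteristic $p$ we only obtain $f, g \in K[x_1^p,\ldots,x_n^p]$, but combining $fH_i = gH_j$ with $\gcd(f,g) = 1$ yields $g \mid H_i$ and $f \mid H_j$, hence $\deg f, \deg g \le d < p$; since a nonzero element of $K[x^p]$ of degree less than $p$ must be a scalar, we conclude $f, g \in K^\ast$ and $\alpha \in K^\ast$.

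This establishes pairwise $K$-linear dependence of the components. The transcendence-degree equality then follows immediately: picking any nonzero component $H_i$, every other $H_j$ is a $K$-multiple of it, so $K(H) = K(H_i)$; and a nonzero homogeneous polynomial of positive degree is transcendental over $K$, giving $\trdeg_K K(H) = 1 = \rk \jac H$.
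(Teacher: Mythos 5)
Your proof is correct, but it follows a genuinely different route from the paper's. The paper normalizes $H$ first: since $\jac H\cdot x = dH \neq 0$, Lemma \ref{rkform}(ii) (whose cardinality hypothesis is automatic here, as $\chr K=0$ or $\chr K>d$) lets one assume $\jac H|_{x=e_1}=E_{11}$ after composing with invertible linear maps on both sides; then a short comparison of $x_1$-degrees in the rows of $\jac H$ forces $H_j=0$ for all $j\ge 2$, which gives the pairwise dependence and the transcendence-degree equality at once (and in fact the stronger normal form reused later in the paper). You instead argue intrinsically: rank one makes every gradient a $K(x)$-multiple of a fixed nonzero $\nabla H_i$, Euler's identity (valid since $\chr K\nmid d$) upgrades $\nabla H_j=\alpha\nabla H_i$ to $H_j=\alpha H_i$, the quotient rule gives $\nabla\alpha=0$, and writing $\alpha=f/g$ in lowest terms you correctly conclude $\partial_{x_k}f=\partial_{x_k}g=0$; in characteristic $p$ the extra step $f,g\in K[x_1^p,\ldots,x_n^p]$ combined with $g\mid H_i$, $f\mid H_j$ and $\deg\le d<p$ forces $f,g\in K^*$ (the degenerate case $H_j=0$ gives $\alpha=0$ directly). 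Your argument buys self-containedness — no normal-form lemma, no choice of evaluation point, no cardinality consideration — and isolates exactly where $\chr K\notin\{1,\ldots,d\}$ enters (invertibility of $d$ and the bound $d<p$ against $K[x^p]$); the paper's argument buys the explicit linear normal form $\tilde H=(\tilde H_1,0,\ldots,0)$, which is what the later proofs actually invoke, so your version proves the stated proposition but would need one extra (easy) line to recover that normal form where the paper cites it.
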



\begin{proof} The case $\rk \jac H=0$ is obvious, so let 
$\rk \jac H=1$. On account of Lemma \ref{rkform}, we may assume that 
$\jac H|_{x=e_1}=E_{11}$. Let $j \ge 2$.
Since $\deg_{x_1} H_j < d$, we infer that either $H_j = 0$, 
or $\deg_{x_1} \parder{}{x_1} H_j < \deg_{x_1} \parder{}{x_i} H_j$ 
for some $i \ge 2$, where $\deg_{x_1} 0 = -\infty$. 
The latter is impossible due to $\rk \jac H=1$, so $H_j = 0$.
This holds for all $j \ge 2$, which yields the desired results.
\end{proof}

\begin{lemma}\label{lemB} Let $H=(h,x_1^2x_2,x_2^2x_3)$ or $(h,x_1^2x_3,x_2^2x_3)\in K[x_1,x_2,x_3]^3$, where $h$ is cubic homogeneous, and assume that $\chr K\neq 2,3$. Then $\rk \jac H=\textup{trdeg}_KK(H)$.
\end{lemma}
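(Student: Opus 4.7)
The plan is to use that in both cases the last two components of $H$, call them $p$ and $q$, are monomials whose exponent vectors are linearly independent in $\Z^3$. Hence $p$ and $q$ are algebraically independent over $K$, giving $\trdeg_K K(H) \ge 2$. Furthermore, a $2\times 2$ submatrix of $\jac H$ formed by the last two rows and a suitable pair of columns has nonzero monomial determinant (using $\chr K\neq 2$ in the case $(p,q) = (x_1^2x_3,x_2^2x_3)$), so $\rk \jac H \ge 2$. Since $\rk \jac H \le \trdeg_K K(H)$ in general (cited earlier in the excerpt), it suffices to prove that $\det \jac H = 0$ forces $h$ to be algebraic over $K(p,q)$.

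A direct cofactor expansion along the first row gives
$$
\det \jac H \;=\; x_1 x_2^2 \bigl(x_1 h_{x_1} - 2 x_2 h_{x_2} + 4 x_3 h_{x_3}\bigr)
$$
in the first case and
$$
\det \jac H \;=\; -2\, x_1 x_2 x_3 \bigl(x_1 h_{x_1} + x_2 h_{x_2} - 2 x_3 h_{x_3}\bigr)
$$
in the second. Assuming $\det \jac H = 0$, the parenthesized factor must vanish in each case (in the second, because $\chr K\neq 2$). Subtracting from Euler's identity $x_1 h_{x_1} + x_2 h_{x_2} + x_3 h_{x_3} = 3h$ and dividing by $3$, which is valid because $\chr K \ne 3$, yields
$$
x_2 h_{x_2} - x_3 h_{x_3} = h \qquad \mbox{or} \qquad x_3 h_{x_3} = h
$$
respectively.

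Writing $h = \sum_{i+j+k=3} c_{ijk}\, x_1^i x_2^j x_3^k$ and matching monomial coefficients, these equations become $(j-k-1)\,c_{ijk} = 0$ and $(k-1)\,c_{ijk} = 0$, respectively. Since $\chr K \notin \{2,3\}$, the integers $j-k-1$ and $k-1$ (each of absolute value at most $4$) vanish in $K$ only when they vanish in $\Z$. Thus in the first case only $(i,j,k) \in \{(2,1,0),(0,2,1)\}$ survive, so $h$ is a $K$-linear combination of $p = x_1^2 x_2$ and $q = x_2^2 x_3$; and in the second case only $(i,j,k) \in \{(2,0,1),(1,1,1),(0,2,1)\}$ survive, so $h$ is a $K$-linear combination of $p$, $q$, and $x_1 x_2 x_3$, where $(x_1 x_2 x_3)^2 = p q$. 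In both cases $h$ is algebraic over $K(p,q)$, so $\trdeg_K K(H) = 2 = \rk \jac H$, as required.

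The main point is really a bookkeeping check rather than a conceptual obstacle: one must see that the "extra" monomial $x_1 x_2 x_3$ appearing in the second case is tamed by the square identity $(x_1 x_2 x_3)^2 = p q$, and that the coefficient-matching really forces $c_{ijk}$ to vanish outside the short explicit list. Both steps rely squarely on $\chr K \notin \{2,3\}$: the scalar factor $-2$ must remain nonzero in the second-case determinant, and one must be able to divide by $3$ after subtracting from Euler's identity.
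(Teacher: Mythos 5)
Your proof is correct and is essentially the paper's argument: the vanishing of $\det \jac H$ is exactly the condition $D(h)=0$ for the paper's diagonal derivation $D$ (your Euler-identity normalization just rescales the weight $d_1-2d_2+4d_3$ to $j-k-1$, resp.\ $k-1$), and both arguments conclude that $h$ lies in the span of the listed cubic monomials, hence is algebraic over $K(H_2,H_3)$. You merely make explicit two points the paper leaves implicit, namely the reduction to the rank-$2$ case and the identity $(x_1x_2x_3)^2=H_2H_3$ behind the paper's ``follows similarly.''
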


\begin{proof} It suffices to consider the case of $\rk \jac H=2$. Define a derivation $D$ on $A=K[x_1,x_2,x_3]$ as follows: for any $f\in A$, $$D(f)=\frac{x_1x_2x_3}{H_2H_3}\det \jac H\mbox{.}$$
In the case $H=(h,x_1^2x_2,x_2^2x_3)$, an easy calculation shows that
$D=x_1\partial_{x_1}-2x_2\partial_{x_2}+4x_3\partial_{x_3}$. Then for any term $u=x_1^{d_1}x_2^{d_2}x_3^{d_3}\in A$, $D(u)=(d_1-2d_2+4d_3)u$. And thus $\ker D:=\{g\in A \mid D(g)=0\}$, the kernel of $D$, is linearly spanned by all terms $u$ with $d_1-2d_2+4d_3=0$. So the only cubic terms in $\ker D$ are $x_1^2x_2$ and $x_2^2x_3$. Since $\rk \jac H=2$, we have $\det\jac H=0$ and thus $h\in \ker D$, which implies that $h$ is a linear combinations of  $x_1^2x_2$ and $x_2^2x_3$. Thus $\textup{trdeg}_KK(H)=2$.

In the case $H=(h,x_1^2x_3,x_2^2x_3)$, one may verify that 
$x_1^2 x_3$, $x_1 x_2 x_3$ and $x_2^2 x_3$ are the
only cubic terms in $\ker D$. The conclusion follows similarly.
\end{proof}

\begin{theorem} \label{eqthm} Assume that $\textup{char}K\neq 2,3$. Then for any cubic homogeneous polynomial map $H\in K[x]^m$ with $\rk \jac H\leq 2$, one has $\rk \jac H = \trdeg_K K(H)$.
\end{theorem}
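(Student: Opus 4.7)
The plan is to invoke Theorem \ref{detdep} with $s=d=3$: the implication ``$\rk\jac H=r\Rightarrow\trdeg_K K(H)=r$'' for cubic homogeneous $H$ and every $r<3$ is reduced to showing that every cubic homogeneous $\tilde H\in L[x_1,x_2,x_3]^3$ with $\rk\jac_{\tilde x}\tilde H\le 2$ satisfies $\trdeg_L L(\tilde H)\le 2$ (this is the contrapositive of the sufficient condition of Theorem \ref{detdep}; combined with the trivial inequality $\rk\le\trdeg$, it yields the theorem). The case $\rk\jac\tilde H\le 1$ is handled directly by Proposition \ref{propA}, so the essential case is $\rk\jac\tilde H=2$.

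For that case, I would first replace $L$ by its algebraic closure $\bar L$ so as to make the base field infinite; this preserves the Jacobian rank and, because the extension is algebraic, also the transcendence degree. Since $\tilde H\ne 0$ is cubic homogeneous, Euler's identity gives $\jac\tilde H\cdot x=3\tilde H\ne 0$, so Lemma \ref{rkform}(ii) applies and, after invertible linear changes on source and target, I may assume $\jac\tilde H|_{x=e_1}=\bigl(\begin{smallmatrix}I_2&0\\0&0\end{smallmatrix}\bigr)$. Consequently $\tilde H_3\in(x_2,x_3)^2$, while $\tilde H_2$ contains $x_1^2x_2$ with coefficient $1$ but neither $x_1^3$ nor $x_1^2x_3$.

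The crux of the argument is to further normalize $\tilde H$ so that Lemma \ref{lemB} applies. Writing $\tilde H_3=x_1Q(x_2,x_3)+C(x_2,x_3)$ with $Q$ quadratic and $C$ cubic, I would case-split on the rank of $Q$ (using $\chr K\neq 2$ to diagonalize) and on the structure of $C$, exploiting the remaining normalizing freedom---namely, linear changes of $x_3$ mixing in $x_2$, together with codomain row operations that add multiples of $\tilde H_3$ to $\tilde H_1$ and $\tilde H_2$, both of which preserve $\jac\tilde H|_{e_1}$ because $\tilde H_3\in(x_2,x_3)^2$. The goal is to bring $(\tilde H_2,\tilde H_3)$ into one of the two configurations $(x_1^2x_2,x_2^2x_3)$ or $(x_1^2x_3,x_2^2x_3)$ treated in Lemma \ref{lemB}; once there, that lemma yields $\trdeg_L L(\tilde H)=2$. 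If instead a normalization step collapses the Jacobian rank to $\le 1$, Proposition \ref{propA} applies.

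The main obstacle will be this final reduction: the residual freedom one retains after fixing $\jac\tilde H|_{e_1}$ is fairly limited, so the case analysis on $Q$ and $C$ requires careful book-keeping, and in certain configurations one must invoke the rank-$2$ hypothesis $\det\jac\tilde H=0$ to argue that the ``stray'' cubic terms in $\tilde H_1$ actually lie in the linear span of $\tilde H_2$ and $\tilde H_3$, so that they do not obstruct the reduction to the shape required by Lemma \ref{lemB}.
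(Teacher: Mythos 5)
Your setup coincides with the paper's: Theorem \ref{detdep} with $s=3$ reduces everything to cubic homogeneous $\tilde H\in L[x_1,x_2,x_3]^3$ with $\rk\jac\tilde H=2$, an extension of scalars makes the base field large enough, Euler's identity (with $\chr K\neq 3$) justifies Lemma \ref{rkform}(ii), and the normalization $\jac\tilde H|_{x=e_1}=E_{11}+E_{22}$ together with the resulting shapes of $\tilde H_2$ and $\tilde H_3$ is exactly what the paper writes down. Up to that point the proposal is correct.

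However, the proof stops where the paper's real work begins. The step you yourself flag as ``the main obstacle'' --- bringing $(\tilde H_2,\tilde H_3)$ to $(x_1^2x_2,x_2^2x_3)$ or $(x_1^2x_3,x_2^2x_3)$, or falling back to Proposition \ref{propA} --- is the actual content of the theorem, and it cannot be reached by the residual moves you list (substitutions $x_3\mapsto x_3+cx_2$ and adding multiples of $\tilde H_3$ to the other components). After the normalization, $\tilde H_3$ may a priori still contain $x_3^3$, $x_2x_3^2$, $x_1x_3^2$, $x_1x_2x_3$, and no linear change of the allowed kind turns, say, $x_1x_2^2+x_2x_3^2$ into $L'\cdot x_2^2$ for a linear form $L'$. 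The paper must first exploit $\det\jac H=0$ itself --- extracting the parts of $x_1$-degree $5$ and $4$ and the coefficient of $x_1^3x_3^3$ --- to prove $x_2^2\mid H_3$, and only then carries out a two-case analysis ($c_{12}\neq 0$ with the linear factor $c_{12}x_1+c_{03}x_2+c_{02}x_3$ not dividing every component, versus the complementary case) that needs substitutions $x_3\mapsto x_3+\lambda x_1$, an interchange of $H_2$ and $H_3$, and a re-run of the normalization of Lemma \ref{rkform} with specially chosen $S$ and $T$ (e.g.\ keeping $Sx$ of the form $(\ast,x_2,\ast)$) before Lemma \ref{lemB} or Proposition \ref{propA} can be invoked. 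Your sketch does mention using $\det\jac\tilde H=0$, but places it in controlling stray cubic terms of $\tilde H_1$ (which Lemma \ref{lemB} already absorbs through its kernel computation), not in pinning down $\tilde H_3$, which is where the hypothesis is indispensable. As written, the proposal is a correct reduction plus an announced but unexecuted case analysis, so the essential part of the proof is missing.
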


\begin{proof} Due to Theorem \ref{detdep}, and replacing $L$ there by $K$,
we may assume that $H\in K[x_1,x_2,x_3]^3,$
and it suffices to show that
$$\textup{trdeg}_KK(H)=3 ~ ~\mbox{implies}~
\rk \jac H=3\mbox{,}$$ or equivalently,
\begin{equation} \label{detdep3}
\det \jac H = 0 ~\mbox{implies} ~
\textup{trdeg}_KK(H)<3\mbox{.}
\end{equation}
So assume that $\det \jac H=0$. Since we may replace $K$ by an extension field to make it large
enough, it follows by Lemma \ref{rkform} that we may assume that
$
\big(\jac H\big)\big|_{x=e_1}
=E_{11}+E_{22}.
$ Then $\jac H$ is of the form 
$$\left(
   \begin{array}{ccc}
     x_1^2+\ast & \ast & \ast \\
     \ast & x_1^2+\ast & \ast \\
     \ast & \ast & \frac{\partial H_3}{\partial x_3} \\
   \end{array}
 \right)\mbox{,}
$$ where the $x_1$-degree of each element $\ast$ is less than 2. Observing the terms with $x_1$-degree $\geq 5$ in $\det \jac H$, we have that
$\frac{\partial H_3}{\partial x_3}\in K[x_2,x_3]$. Notice that $H_2$ and $H_3$ are of the form:
\begin{align*}
H_2&=x_1^2x_2+b_{10}x_1x_3^2+b_{11}x_1x_2x_3+b_{12}x_1x_2^2+b_0(x_2,x_3)\mbox{;}\\
H_3&=c_{12}x_1x_2^2+c_{00}x_3^3+c_{01}x_2x_3^2+c_{02}x_2^2x_3+c_{03}x_2^3\mbox{.}
\end{align*}
We shall show that $x_2^2 \mid H_3$, i.e., $c_{00} = c_{01} = 0$.

Noticing that the part of $x_1$-degree 4 of $\det \jac H$ is
$\big(\frac{\partial H_3}{\partial x_3}-\frac{\partial H_2}{\partial x_1\partial x_3}\frac{\partial H_3}{\partial x_1\partial x_2}\big)x_1^4$, we see that
$\frac{\partial H_3}{\partial x_3}-\frac{\partial H_2}{\partial x_1\partial x_3}\frac{\partial H_3}{\partial x_1\partial x_2}=0$.
Consequently,
$$
(3 c_{00}x_3^2 + 2 c_{01}x_2 x_3 + c_{02}x_2^2) =
(2 b_{10} x_3 + b_{11} x_2) (2 c_{12} x_2)
$$
so
\begin{align*}
c_{00} &= 0 & c_{01} &= 2 b_{10} c_{12} & c_{02} &= 2 b_{11} c_{12}
\end{align*}
One may observe that the coefficient of $x_1^3x_3^3$ in 
$\det \jac H$ is $2c_{01}b_{10}=0$, which we can combine with
$c_{01} = 2 b_{10} c_{12}$ to obtain $c_{01} = 0$.
Therefore, 
$$H_3=(c_{12}x_1+c_{03}x_2+c_{02}x_3)x_2^2\mbox{.}$$
Moreover, if $c_{12}=0$ then $c_{02} = 2 b_{11} c_{12} = 0$ and thus $H_3=c_{03} x_2^3$. 

We distinguish two cases.
\begin{itemize}

\item\emph{Case 1:} $c_{12} \ne 0$ and 
$c_{12}x_1+c_{03}x_2+c_{02}x_3 \nmid H_i$ 
for some $i$.

Then $H_3$ is the product of two linear forms, of which
two are distinct. Hence we can compose $H$ with invertible
linear maps on both sides, to obtain a map $H'$ for which 
$H'_2 = x_1^2 x_2$, and $x_2\nmid H'_1$.

Notice that $H'_1(1,0,t) \ne 0$. As $K$ has
at least $5$ elements, it follows from [3, Lemma 5.1 (i)] that there
exists a $\lambda \in K$, such that $H'_1(1,0,\lambda) \ne 0$.
Hence the coefficient of $x_1^3$ in $H'_1(x_1,x_2,x_3+\lambda x_1)$ 
is nonzero. Furthermore, $H'_2(x_1,x_2,x_3+\lambda x_1) = x_1^2 x_2$.

Replacing $H'$ by $H'(x_1,x_2,x_3+\lambda x_1)$, we may assume that 
$H'_2=x_1^2x_2$ and that $H'_1$ contains $x_1^3$ as a term.
We may even assume that the coefficient of $x_1^3$ in $H'_1$
equals $1$. Then $\jac H'|_{x=e_1}$ is of the form 
$$
\left( \begin{array}{ccc}
1 & \ast & a \\
0 & 1 & 0 \\
\ast & \ast & \ast \\
\end{array} \right)\mbox{,}
$$
and has rank 2. Furthermore, $v_3=(-a,0,1)^t$ belongs to its null space.  
We may apply the proof of Lemma \ref{rkform} on $H'$ by taking 
$T=(e_1,e_2,v_3)$ and taking an appropriate $S\in \GL_3(K)$ such 
that $\widetilde{H}:=SH'(Tx)$ satisfies
$\jac \widetilde{H}|_{x=e_1}=S\jac H'|_{x=Te_1}T=E_{11}+E_{22}$.
Notice that $Tx$ is of the form $(L_1,x_2,L_3)$, and observing 
the form of $\jac H'|_{x=e_1}$ one may also choose $Sx$ to be 
of the form $(\ast, x_2,\ast)$. Then $\widetilde{H}_2=L_1^2x_2$. 

So we can compose $\widetilde{H}$ with an invertible linear map 
on the right, to obtain a map $\widetilde{H}'$ for which 
$\widetilde{H}'_2 = x_1^2 x_2$ and $\widetilde{H}'_3 = x_2^2 L'$ 
for some linear form $L'$. 

Suppose first that $L'$ is a linear combination of $x_1$ and $x_2$.
If $\widetilde{H}'_1 \in K[x_1,x_2]$, then we are done. Otherwise,
we have $\det \jac_{x_1,x_2} (\widetilde{H}'_2,\widetilde{H}'_3)=0$, 
and then by Proposition \ref{propA}, 
$\textup{trdeg}_K K(H'_2,H'_3)<2$. 

Suppose next that $L'$ is not a linear combination of 
$x_1$ and $x_2$. Then we may assume that $\widetilde{H}'_3 = x_2^2 x_3$. 
By Lemma \ref{lemB} (i), $\textup{trdeg}_K K(\widetilde{H}')<3$.

\item\emph{Case 2:} $c_{12} = 0$ or 
$c_{12}x_1+c_{03}x_2+c_{02}x_3 \mid H_i$ 
for all $i$.

Since $x_2^2 \mid H_3$, we can compose $H$ with invertible
linear maps on both sides, to obtain a map $H'$ for which 
$H'_1 \in \{x_1^3, x_1^2 x_2\}$.
After a possible interchange of $H'_2$ and $H'_3$, 
the first two rows of $\jac H'$ are independent.
Now we may apply the proof of Lemma \ref{rkform} to $H'$,
more precisely, there exist $S,T\in \GL_3(K)$ such that 
$\widetilde{H} := SH'(Tx)$ satisfies 
$\jac \widetilde{H}|_{x=e_1}=E_{11}+E_{22}$. 
If we choose $w$ such that first two rows of $(\jac H')_{x=w}$
are independent, then we can take $S$ such that
$Sx=(f_1x_1+f_2x_2,g_1x_1+g_2x_2,\ast)$. 
By repeating the discussion for $\widetilde{H}$ as for $H$ above, 
we may assume that $\widetilde{H}_3=Lx_2^2$ for some linear form $L$.

Let $Tx=(L_1,L_2,L_3)$. Notice that 
$H'_1(Tx) \in \{L_1^3,L_1^2L_2\}$ and that $H'_1(Tx)$ is a linear 
combination of $\widetilde{H}_1$ and $\widetilde{H}_2$. Hence we can 
compose $\widetilde{H}$ with a linear map on the left, to obtain a
map $\widetilde{H}'$ for which 
$\widetilde{H}'_2 \in \{L_1^3,L_1^2L_2\}$ and 
$\widetilde{H}'_3 = Lx_2^2$.

Suppose first that $\widetilde{H}'_2 = L_1^2L_2$. 
Then $c_{12} \ne 0$, so
$c_{12}x_1+c_{03}x_2+c_{02}x_3 \mid H_i$ for all $i$. 
From this, we infer that $L_2 \mid \widetilde{H}_i$ and
$L_2 \mid \widetilde{H}'_i$ for all $i$. As 
$x_2 \nmid \widetilde{H}_1$, we deduce that $L$ and $L_2$ are
dependent linear forms, which are independent of $x_2$.
If $L$ and $L_2$ are linear combinations of $L_1$ and $x_2$, then we
can reduce to Proposition \ref{propA}, and otherwise we can
reduce to Lemma \ref{lemB} (ii).

Suppose next that $\widetilde{H}'_2 = L_1^3$. If $L$, $L_1$ and $x_2$
are linearly dependent over $K$, then we can reduce to Proposition \ref{propA}.
Otherwise, $\tilde{H}$ is as $H$ in the previous case. \qedhere

\end{itemize}
\end{proof}

\begin{remark} Inspired by Lemma \ref{lemB}, we investigated 
maps $H$ of which the components are terms, and searched for
$H$ with algebraically independent components for which 
$\det \jac H = 0$. One can infer that $H$ is as such, if and 
only if the matrix with entries $\deg_{x_i} H_j$ has determinant 
zero over $K$, but not over $\Z$.

We found the following non-homogeneous $H$ as above over fields of
characteristic $5$:
$$
(x_1^3 x_2, x_1 x_2^2), \quad (x_1^2 x_2, x_1 x_3^2, x_2 x_3)
$$
with the following homogenizations respectively:
$$
(x_1^3 x_2, x_1 x_2^2 x_3, x_3^4), \quad (x_1^2 x_2, x_1 x_3^2, x_2 x_3 x_4, x_4^3)
$$
Besides these homogenizations, we found the following homogeneous $H$
over fields of characteristic $5$:
$$
(x_1^2 x_3^2, x_1 x_2^3, x_2 x_3^3), \quad (x_4 x_1^2, x_1 x_2^2, x_2 x_3^2, x_3 x_4^2)
$$
We conclude with a homogeneous $H$ over fields of characteristic $7$, and
a homogeneous $H$ over any characteristic $p \in \{1,2,\ldots,d\}$ 
respectively:
$$
(x_3 x_1^3, x_1 x_2^3, x_2 x_3^3), \quad (x_1^d, x_1^{d-p} x_2^p)
$$
These examples show that the conditions in Proposition \ref{propA} and Theorem
\ref{eqthm} cannot be relaxed.
\end{remark}

\begin{theorem} \label{rkle2}
Suppose that $\textup{char} K\neq 2,3$ and let $H \in K[x]^m$ be cubic
homogeneous. Let $r := \rk \jac H$ and suppose that $r \le 2$.
Then there exist $S \in \GL_m(K)$ and $T \in \GL_n(K)$,
such that for $\tilde{H} := SH(Tx)$, one of the following statements holds:
\begin{enumerate}[\upshape (1)]

\item $\tilde{H}_{r+1} = \tilde{H}_{r+2} = \cdots = \tilde{H}_m = 0$;

\item $r=2$ and $\tilde{H} \in K[x_1,x_2]^m$;

\item $r=2$ and $K \tilde{H}_1 + K \tilde{H}_2 + \cdots + K \tilde{H}_m
= K x_3 x_1^2 \oplus K x_3 x_1 x_2 \oplus K x_3 x_2^2$.
\end{enumerate}
Furthermore, we may take $S = T^{-1}$ if $m = n$.
\end{theorem}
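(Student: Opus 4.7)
I would first dispose of the low-rank cases. When $r=0$, the Euler relation $\jac H\cdot x=3H$ (valid since $\chr K\ne 3$) forces $H=0$, so (1) holds with $S=T=I$. When $r=1$, Proposition \ref{propA} yields that the components of $H$ lie on a single line of $K[x]_{(3)}$, hence the image of $H$ lies in a $1$-dimensional subspace $W\subseteq K^m$; any $S\in\GL_m(K)$ sending $W$ to $Ke_1$ puts $\tilde H$ in form (1) with $T=I_n$, and when $m=n$ I refine by choosing $T\in\GL_n(K)$ with $Te_1\in W$ and setting $S=T^{-1}$. When $r=2$, Theorem \ref{eqthm} gives $\trdeg_K K(H)=2$; if $V:=\sum_i K H_i$ satisfies $\dim_K V\le 2$, a basis-picking $S$ again yields (1), with the same $m=n$ refinement.

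The crux is therefore $r=2$ with $\dim_K V\ge 3$. My plan is to establish the following intrinsic dichotomy: either (2-int) $V\subseteq K[L_1,L_2]_{(3)}$ for some linearly independent linear forms $L_1,L_2\in K[x]$, or (3-int) $V=L_3\cdot\operatorname{Sym}^2(L_1,L_2)$ for some linearly independent linear forms $L_1,L_2,L_3\in K[x]$. To prove this, pick $K$-linearly independent $H_1,H_2,H_3\in V$ and apply Lemma \ref{rkform}(ii) (Euler gives $\jac H\cdot x=3H\ne 0$; if $|K|$ is too small, enlarge $K$ and descend at the end since the target conditions are $K$-rational), obtaining $(S,T)$ with $\jac\tilde H|_{x=e_1}=E_{11}+E_{22}$, equivalently
\[
\tilde H_1=\tfrac13 x_1^3+x_1 A_1+B_1,\quad \tilde H_2=x_1^2 x_2+x_1 A_2+B_2,\quad \tilde H_j=x_1 A_j+B_j\;(j\ge 3),
\]
with $A_\ell,B_\ell\in K[x_2,\ldots,x_n]$ of degrees $2$ and $3$. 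The $x_1^5$- and $x_1^4$-coefficients of the $3\times 3$ minors of $\jac\tilde H$ in columns $1,2,k$ (any $k\ge 3$) must vanish, pinning down $A_j=a_j x_2^2$ and $B_j-2a_j x_2 A_2\in K x_2^3$. Iterating the two Cases of the proof of Theorem \ref{eqthm} (further applications of Lemma \ref{rkform} culminating in Lemma \ref{lemB}) then drives each triple $(\tilde H_1,\tilde H_2,\tilde H_j)$ into one of the two intrinsic cases. The main obstacle is uniformity across different $j\ge 3$: the analysis on $(H_1,H_2,H_j)$ might a priori yield different linear forms $L_j$ or different intrinsic cases for different $j$, but any such incompatibility would, together with $\tilde H_1,\tilde H_2$, produce four $K$-linearly independent cubics whose Jacobian has rank $\ge 3$, contradicting $\rk\jac H=2$.

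Having identified the intrinsic case, I construct $(S,T)$ explicitly. In case (2-int), choose $T\in\GL_n(K)$ with $L_1(Tx)=x_1$ and $L_2(Tx)=x_2$; then $H(Tx)\in K[x_1,x_2]^m$ and any $S\in\GL_m(K)$ gives $\tilde H\in K[x_1,x_2]^m$, which is (2). In case (3-int), choose $T$ with $L_i(Tx)=x_i$ for $i=1,2,3$; then each component of $H(Tx)$ lies in $x_3\operatorname{Sym}^2(x_1,x_2)$, and since $\dim_K V=3$, the $m$ components of $H(Tx)$ $K$-span the whole $3$-dimensional space $Kx_3 x_1^2\oplus Kx_3 x_1 x_2\oplus Kx_3 x_2^2$, so any $S\in\GL_m(K)$ yields (3). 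For the clause $S=T^{-1}$ when $m=n$, in each of (1)--(3) the relevant target subspace of $K[x]^n$ (those with last $n-r$ components zero in (1); $K[x_1,x_2]^n$ in (2); $(x_3\operatorname{Sym}^2(x_1,x_2))^n$ in (3)) is preserved by the linear recombination of components induced by $T^{-1}$, so $S:=T^{-1}$ suffices and $\tilde H=T^{-1}H(Tx)$ has the stated form.
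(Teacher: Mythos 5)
Your argument splits into a routine part and a crux, and the crux is precisely what is not proved. For $r=2$ and $\dim_K\sum_i KH_i\ge 3$, the dichotomy you call (2-int)/(3-int) is exactly the structural input that the paper imports from \cite[Theorem 2.7]{bondt15} (applied to $tH$, using $\trdeg_K K(tH)=2$, which gives $H=g\cdot h(p,q)$ and hence the three shapes (1)--(3) by inspecting $\deg h$). Your preliminary computation is fine: Lemma \ref{rkform}(ii) applies (Euler gives $\jac H\cdot x=3H\ne 0$, and $\chr K\ne 2,3$ already forces $|K|\ge 5$, so no field extension is needed), and the $x_1^5$- and $x_1^4$-coefficients of the minors on rows $1,2,j$ do yield $A_j=a_jx_2^2$ and $B_j-2a_jx_2A_2\in Kx_2^3$. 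But from there, ``iterating the two Cases of the proof of Theorem \ref{eqthm}'' is not available as stated: those cases are formulated for maps in \emph{three} variables with three components, reached via Theorem \ref{detdep}, whose substitution $x_i\mapsto x_1x_i$ destroys the very structure you are trying to classify; and their conclusions are transcendence-degree statements fed into Proposition \ref{propA} or Lemma \ref{lemB} to contradict $\trdeg=3$, not normal forms for an $n$-variable rank-two map. In particular nothing in your sketch yet constrains $\tilde H_1$ and $\tilde H_2$ themselves, which is where the real work lies. The uniformity step is also not a proof: ``four $K$-linearly independent cubics whose Jacobian has rank $\ge 3$'' is false as a general principle (e.g.\ $x_1^3,x_1^2x_2,x_1x_2^2,x_2^3$ are independent with Jacobian rank $2$), so ruling out a mismatch of cases or of the linear forms $L_i$ across different $j$ needs an actual case analysis. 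So the heart of the theorem is asserted rather than established; either invoke \cite[Theorem 2.7]{bondt15} as the paper does, or carry out the $n$-variable structural classification in full.

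A smaller, fixable slip: your final argument for $S=T^{-1}$ when $m=n$ does not work in case (1), because the set of maps whose last $n-r$ components vanish is \emph{not} stable under the linear recombination of components induced by $T^{-1}$ (it is stable for (2) and (3), where your argument is fine). The paper handles (1) the other way around, keeping $S$ and replacing $T$ by $S^{-1}$; alternatively, the trick you used for $r\le 1$ (choose $T$ so that $Te_1,\ldots,Te_r$ span the relevant coefficient space in $K^n$ and set $S=T^{-1}$) repairs it.
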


\begin{proof} By Theorem \ref{eqthm}, $\trdeg_KK(H)=\rk \jac H=r\leq 2$.
Since $H$ is homogeneous, we have
$\trdeg_K K(tH) = r$ as well, where $t$ is a new variable.

Suppose first that $r \le 1$.
It follows by \cite[Theorem 2.7]{bondt15} that we may take $\tilde{H}$ as in (1).

Suppose next that $r = 2$. By \cite[Theorem 2.7]{bondt15},
$H$ is of the form $g \cdot h(p,q)$,
such that $g, h$ and $(p,q)$ are homogeneous and
$\deg g + \deg h \cdot \allowbreak \deg (p,q) = 3$.

If $\deg h \le 1$, then every triple of components of $h$ is linearly dependent
over $K$, and thus we may take $\tilde{H}$ as in (1).
If $\deg h = 3$, then $\deg (p,q) = 1$ and $\deg g = 0$, whence we may take
$\tilde{H}$ as in (2).

So assume that $\deg h = 2$. Then $\deg(p,q) = 1$ and $\deg g = 1$. If
$g$ is a linear combination of $p$ and $q$, then we may take $\tilde{H}$
as in (2). If $g$ is not a linear combination of $p$ and $q$,
then we may take $\tilde{H}$ as in (3) or (1).

Finally, if $m = n$ and $\tilde{H}=SH(Tx)$ is as in (1), then $SH(S^{-1}x)=\tilde{H}(T^{-1}S^{-1}x)$ is still as in (1).
So we may take $S=T^{-1}$. If $m = n$ and $\tilde{H}=SH(Tx)$ is as in (2) or (3), then
$T^{-1}H(Tx)=T^{-1}S^{-1}\tilde{H}$ is still as in (2) or (3), whence we may also take $S=T^{-1}$.
\end{proof}

\section{Cubic homogeneous Keller maps $x+H$ with $\textup{rk} JH\leq 2$}

For two matrices $M, N\in \Mat_n(K[x])$, we say that $M$ is similar over $K$ to $N$, if there exists $T \in \GL_n(K)$ such that
$N = T^{-1}MT$. 

\begin{theorem} \label{trdeg1}
Let $F = x + H\in K[x]^n$ be a Keller map with $\trdeg_K K(H) = 1$.
Then $\jac H$ is similar over $K$ to a triangular matrix, and the following
statements are equivalent:
\begin{enumerate}[\upshape (1)]

\item $\det \jac F = 1$;

\item $\jac H$ is nilpotent;

\item $(\jac H) \cdot (\jac H)|_{x=y} = 0$, where $y=(y_1,y_2,\ldots,y_n)$ are $n$ new variables.

\end{enumerate}
\end{theorem}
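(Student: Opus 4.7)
Since $\trdeg_K K(H) = 1$, we have $\rk \jac H \le 1$. If $\jac H = 0$ the theorem is immediate: the zero matrix is triangular, $\det \jac F = 1$, $\jac H$ is nilpotent, and $(\jac H)(\jac H)|_{x=y} = 0$. Assume henceforth that $\rk \jac H = 1$, and over $K(x)$ write $\jac H = u v^T$ with $u,v \in K(x)^n$. The matrix-determinant lemma gives $\det \jac F = \det(I+uv^T) = 1 + v^T u$, so the Keller hypothesis forces $v^T u \in K$; note that $v^T u = \tr \jac H$. Since $\jac H$ has rank at most one, its characteristic polynomial is $\lambda^{n-1}(\lambda - \tr \jac H)$, hence $\jac H$ is nilpotent if and only if $\tr \jac H = 0$, which is also equivalent to $\det \jac F = 1$. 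This proves (1) $\Leftrightarrow$ (2). Setting $y = x$ in (3) gives $(\jac H)^2 = 0$, so $\jac H$ is nilpotent, proving (3) $\Rightarrow$ (2).

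For the remaining implication (2) $\Rightarrow$ (3) and for the triangularization claim, the plan is to invoke the structural form of $H$. By a L\"uroth-type argument applied to the subfield $K(H) \subseteq K(x)$ of transcendence degree one, there is a polynomial $f \in K[x]$ with $K(f) = K(H)$, chosen so that each $H_i = h_i(f)$ for some $h_i \in K[t]$. Then $\jac H = h'(f)(\nabla f)^T$, identifying $u = h'(f)$ and $v = \nabla f$. Introduce the $K$-subspace
$$
W := \{\, b \in K^n : b \cdot \nabla f = 0 \ \text{in}\ K[x] \,\},
$$
with $w := \dim_K W$. Writing $h'(t) = \sum_k b_k t^k$ with $b_k \in K^n$, condition (2) becomes $\sum_k f^k\,(b_k \cdot \nabla f) = 0$ in $K[x]$. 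Each $p_k := b_k \cdot \nabla f$ lies in the $K$-span of $\partial_1 f,\dots,\partial_n f$ and so has degree at most $\deg f - 1$, whence a degree comparison shows that the nonzero terms $p_k f^k$ have strictly increasing degrees in $k$. This forces every $p_k = 0$, so $b_k \in W$ for all $k$ and $h'(t) \in W \otimes K[t]$.

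Now choose $T \in \GL_n(K)$ whose first $w$ columns form a basis of $W$ and whose remaining columns extend to a basis of $K^n$. Then $T^{-1}\jac H\,T = (T^{-1}h'(f))((\nabla f)^T T)$: the first factor has nonzero entries only in positions $1,\dots,w$ (because $h'(f)$ takes values in $W$), while the second factor has nonzero entries only in positions $w+1,\dots,n$ (because the first $w$ columns of $T$ lie in $W$, the kernel of $b \mapsto b \cdot \nabla f$). Consequently $T^{-1}\jac H\,T$ is supported entirely in the top-right $w \times (n-w)$ block, hence it is strictly upper triangular, which yields the triangularization. Moreover, the product $(T^{-1}\jac H(x) T)(T^{-1}\jac H(y) T)$ vanishes because its middle inner product $((\nabla f(x))^T T)(T^{-1} h'(f(y)))$ is zero by complementary support, so $\jac H(x) \jac H(y) = 0$, giving (3). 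The non-nilpotent case $\tr \jac H = c \ne 0$ follows in parallel: the same argument yields $b_k \in W$ for $k \ge 1$ and $b_0 \cdot \nabla f = c$, and adjoining $b_0$ as the $(w+1)$-st column of $T$ produces an upper-triangular form of $\jac H$ whose unique nonzero diagonal entry $c$ sits at position $(w+1,w+1)$. The chief technical hurdle is the L\"uroth reduction delivering both $f$ and the $h_i$ as polynomials; this is routine in characteristic zero but will require care (a separability or primitivity hypothesis on $f$) in positive characteristic.
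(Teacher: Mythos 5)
Your proposal is correct and follows essentially the same route as the paper: both rest on the structural fact that $\trdeg_K K(H)=1$ yields a polynomial $p$ with each $H_i=h_i(p)$, hence $\jac H=h'(p)\cdot\jac p$, followed by a degree-separation argument (entries of $\jac p$ have degree less than $\deg p$) showing that constancy of $\tr\jac H=\det\jac F-1$ forces the complementary-support structure giving triangularity and the equivalence of (1)--(3). The L\"uroth-type input you flag as a hurdle in positive characteristic is exactly what the paper imports from \cite[Corollary 3.2]{bondt15}, valid in arbitrary characteristic, so your argument is complete modulo that citation.
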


\begin{proof}
Since $\trdeg_K K(H) = 1$, by \cite[Corollary 3.2]{bondt15} there exists a polynomial
$p \in K[x]$ such that $H_i \in K[p]$ for each $i$. Say that $H_i = h_i(p)$,
where $h_i \in K[t]$ for each $i$. Write $h_i' = \parder{}{t} h_i$,
then
\begin{equation} \label{JHhp}
\jac H = h'(p) \cdot \jac p\mbox{.}
\end{equation}
Assume without loss of generality that
$$
h_1' = h_2' = \cdots = h_s' = 0\mbox{,}
$$
and that
$$
0 \le \deg h_{s+1}' < \deg h_{s+2}' < \cdots < \deg h_n'\mbox{.}
$$
For  $s < i < n$,
$$
\deg h_i'(p) = \deg h_i' \cdot \deg p
\le (\deg h_{i+1}' - 1) \cdot \deg p = \deg h_{i+1}'(p) - \deg p.
$$
Since the degrees of the entries of $\jac p$ are less than $\deg p$, we deduce from
\eqref{JHhp} that the nonzero entries on the diagonal of $\jac H$
have different degrees in increasing order. Furthermore, the
nonzero entries beyond the $(s+1)$th entry on the diagonal of $\jac H$ have positive
degrees.

By \eqref{JHhp}, $\rk (- \jac H) \le 1$, and thus
$n-1$ eigenvalues of $-\jac H$ are zero. It follows that
the trailing degree of the characteristic polynomial of $-\jac H$ is at least
$n-1$. More precisely,
$$
\det (t I_n+\jac H) = t^n - \tr (-\jac H) \cdot t^{n-1}\mbox{,}
$$
and thus
$$
\det \jac F = \big(t^n - \tr (-\jac H) \cdot t^{n-1}\big)\big|_{t=1} = 1 + \tr \jac H\mbox{.}
$$
Observe that the diagonal of $\jac H$ is totally zero, except maybe the
$(s+1)$th entry, which is a constant.

So $\parder{}{x_i} p = 0$ for all $i > s+1$, and $\jac H$ is lower triangular.
If the $(s+1)$th entry on the diagonal of $\jac H$ is nonzero, then
(1), (2) and (3) do not hold. If the $(s+1)$th entry on the
diagonal of $\jac H$ is zero, then $\parder{}{x_i} p = 0$ for all $i > s$, whence
(1), (2) and (3) hold.
\end{proof}

Let $H \in K[x]^n$ be homogeneous of degree $d\geq 2$. Then $x + H$ is
a Keller map if and only if $\jac H$ is nilpotent; see for example \cite[Lemma 6.2.11]{essen2000}. So we first investigate  nilpotent matrices over $K[x]$.

\begin{lemma} \label{2x2}
Let $N \in \Mat_2(K[x])$ such that $N$ is nilpotent. Then there
exist $a,b,c \in K[x]$ such that
$$
N = c \left( \begin{array}{cc}
ab & -b^2 \\ a^2 & -ab \end{array}\right)\mbox{.}
$$
Furthermore, $N$ is similar over $K$ to a triangular matrix if and only
if $a$ and $b$ are linearly dependent over $K$.
\end{lemma}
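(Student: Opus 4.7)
The plan is to start from the nilpotency of $N$: since $N \in \Mat_2(K[x])$ has characteristic polynomial $t^2 - (\tr N)\,t + \det N$, the nilpotency of $N$ forces $\tr N = \det N = 0$. Writing $N = \bigl(\begin{smallmatrix} P & -Q \\ R & -P \end{smallmatrix}\bigr)$, these two conditions collapse into the single relation $P^2 = QR$. The goal is then to realize $Q = cb^2$, $R = ca^2$, and $P = cab$ for some $a,b,c \in K[x]$.

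For the existence of $a,b,c$, I will use that $K[x]$ is a UFD. The trivial cases where $N = 0$, or where one of $Q,R$ vanishes (in which case $P = 0$ too, by $P^2 = QR$), are handled directly, so assume $Q,R \ne 0$. Setting $d := \gcd(Q,R)$ and writing $Q = du$, $R = dv$ with $\gcd(u,v) = 1$, the relation $P^2 = d^2 uv$ combined with unique factorization gives $d \mid P$, so $P = dw$ with $w^2 = uv$. Since $\gcd(u,v) = 1$ and their product is a square, in a UFD each factor is a square up to a unit: $u = \mu b^2$ and $v = \nu a^2$ with $\mu, \nu \in K^*$ and $a, b \in K[x]$. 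Then $w^2 = \mu\nu (ab)^2$, and because $w/(ab) \in K(x)$ has its square in $K$ it must itself lie in $K$, forcing $\mu\nu$ to be a square in $K$. A rescaling of $a$ or $b$ by an element of $K^*$ (to equalize the units) together with the choice $c := d\nu$ then delivers the desired presentation.

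For the equivalence, observe that $N$ factors as $N = c \bigl(\begin{smallmatrix} b \\ a \end{smallmatrix}\bigr) (a,\,-b)$, so it has rank at most one; its kernel, when $N \ne 0$, is cut out by the single equation $a w_1 = b w_2$. If $a$ and $b$ are $K$-linearly dependent, write $(a,b) = \mu (a_0, b_0)$ with $(a_0,b_0) \in K^2$ and $\mu \in K[x]$. Then $N = (c\mu^2)\,M_0$ for a constant nilpotent $M_0 \in \Mat_2(K)$, and any such $M_0$ can be put into strictly triangular form by some $T \in \GL_2(K)$; the same $T$ triangularizes $N$. Conversely, if $T^{-1} N T$ is triangular for some $T \in \GL_2(K)$, then nilpotency of $N$ forces the diagonal to vanish, so one column of $T$ yields a nonzero vector $w \in K^2$ with $Nw = 0$. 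The kernel equation $a w_1 = b w_2$ is then a nontrivial $K$-linear relation between $a$ and $b$, proving dependence.

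The main obstacle I anticipate is in the existence step: verifying that the units $\mu,\nu \in K^*$ extracted from factoring $u$ and $v$ as squares can be combined consistently, i.e., that $\mu\nu$ is actually a square in $K$ and that a single rescaling of $a$ or $b$ makes one $c \in K[x]$ satisfy all three equations $Q = cb^2$, $R = ca^2$, $P = cab$ simultaneously. This is a short but slightly fiddly UFD computation; once it is cleared, the two halves of the ``iff'' reduce to the kernel computation above.
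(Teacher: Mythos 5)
Your proof is correct, and its existence half takes a somewhat different route from the paper's. The paper argues via a rank-one outer-product factorization over $K(x)$: since $\det N=0$, it writes $N=c\cdot\binom{b}{a}\cdot(a,\,-\tilde b)$ with $a,b\in K[x]$ and $\tilde b,c\in K(x)$, gets $\tilde b=b$ from $\tr N=0$, and then notes that choosing $a,b$ relatively prime forces $c\in K[x]$. You instead solve the single entrywise relation $P^2=QR$ by unique factorization: extract $d=\gcd(Q,R)$, use that coprime factors of a square in a UFD are squares up to units, and then settle the unit bookkeeping ($w/(ab)\in K$, either by your algebraic-closedness argument or, more simply, because $w$ and $ab$ are associates in $K[x]$), after which the rescaling $b\mapsto(\epsilon/\nu)b$ with $c:=d\nu$ indeed satisfies all three equations $R=ca^2$, $Q=cb^2$, $P=cab$ — the step you flagged as fiddly does go through. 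Both arguments rest on $K[x]$ being a UFD; the paper's coprimality choice absorbs the scaling issues you handle by hand, while your version makes explicit why $c$ lands in $K[x]$. For the ``furthermore'' part you use a constant kernel vector (a column of the triangularizing $T\in\GL_2(K)$) to produce the relation $aw_1-bw_2=0$, whereas the paper phrases the same content via $K$-linear dependence of the rows of $N$; these are equivalent, and yours is spelled out in more detail. The one loose end, shared with the paper, is the degenerate situation $N=0$ (and more generally the trivial cases you defer), where the equivalence only holds for the natural choice of $a,b,c$ in which one of $a,b$ is taken to be $0$; fixing that choice explicitly removes any ambiguity in the ``if and only if''.
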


\begin{proof}
Since $\det N = 0$, we may write $N$ in the form
$$
N = c \cdot \binom{b}{a} \cdot \big(\,a ~~ {-\tilde{b}}\,\big)\mbox{,}
$$
where $a,b \in K[x]$ and $\tilde{b},c \in K(x)$. Since $\tr N = 0$,
we have $\tilde{b} = b$. If we choose $a$ and $b$ to be relatively prime,
then $c \in K[x]$ as well.

Furthermore, $a$ and $b$ are linearly dependent over $K$
if and only if the rows of $N$ are linearly dependent over $K$,
if and only if $N$ is similar over $K$ to a triangular matrix.
\end{proof}

\begin{lemma} \label{2x2nilp}
Let $H \in K[x]^2$ be cubic homogeneous, such that $\jac_{x_1,x_2} H$ is
nilpotent. Then there exists $T \in \GL_2(K)$ such that for
$\tilde{H} := T^{-1} H\big(T(x_1,x_2),x_3,\allowbreak x_4,\allowbreak
\ldots,x_n\big)$, one of the following statements holds:
\begin{enumerate}[\upshape (1)]

\item $\jac_{x_1,x_2} \tilde{H}$ is a triangular matrix;

\item there are independent linear forms $a, b \in K[x]$,
such that
$$
\jac_{x_1,x_2} \tilde{H} =
\left( \begin{array}{cc} ab & -b^2 \\ a^2 & -ab \end{array} \right)
~~ \mbox{ and } ~~
\jac_{x_1,x_2} \left( \begin{array}{c} a \\ b \end{array} \right)
=  \left( \begin{array}{cc} 0 & 0 \\ 0 & 0 \end{array} \right)\mbox{;}
$$

\item $\textup{char} K=3$ and there are independent linear forms $a, b \in K[x]$,
such that
$$
\jac_{x_1,x_2} \tilde{H} =
\left( \begin{array}{cc} ab & -b^2 \\ a^2 & -ab \end{array} \right)
~~\mbox{and} ~~
\jac_{x_1,x_2} \left( \begin{array}{c} a \\ b \end{array} \right)
=  \left( \begin{array}{cc} 0 & 1 \\ 1 & 0 \end{array} \right)\mbox{.}
$$
\end{enumerate}
\end{lemma}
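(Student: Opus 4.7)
My plan is to start from Lemma~\ref{2x2}, which (by the nilpotence of $\jac_{x_1,x_2} H$) yields a decomposition $\jac_{x_1,x_2} H = c\binom{b}{a}(a,-b)$ with $a,b\in K[x]$ relatively prime and $c\in K[x]$. Cubic homogeneity of $H$ makes every entry of $\jac_{x_1,x_2}H$ homogeneous of degree $2$; a short degree count, using that the GCD of homogeneous polynomials is homogeneous, then reveals that the only nontrivial possibility is $\deg a=\deg b=1$ and $c\in K^*$, with $a,b$ homogeneous linear forms. If $a$ and $b$ are $K$-linearly dependent, the second half of Lemma~\ref{2x2} supplies $T\in\GL_2(K)$ making $T^{-1}\jac_{x_1,x_2}H\,T$ triangular; since $T$ is constant, the chain-rule identity $\jac_{x_1,x_2}\tilde H = T^{-1}(\jac_{x_1,x_2}H)|_{x\mapsto Tx}\,T$ also triangularizes $\jac_{x_1,x_2}\tilde H$, giving case~(1).

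Assume from now on that $a$ and $b$ are $K$-linearly independent. Writing $\alpha_j:=\partial a/\partial x_j$ and $\beta_j:=\partial b/\partial x_j$ (all constants), I would equate the mixed partials of $H_1$ and of $H_2$; after dividing by the nonzero constant $c$, these collapse to the polynomial identities
\begin{align*}
\beta_2\,a + (\alpha_2+2\beta_1)\,b &= 0,\\
(2\alpha_2+\beta_1)\,a + \alpha_1\,b &= 0.
\end{align*}
$K$-linear independence of $a,b$ then forces $\alpha_1=\beta_2=0$, $\alpha_2+2\beta_1=0$, and $2\alpha_2+\beta_1=0$, hence $\alpha_2=\beta_1$ and $3\alpha_2=0$.

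To close the proof I would split on the characteristic. If $\chr K\neq 3$, then $\alpha_2=\beta_1=0$, so $a,b\in K[x_3,\ldots,x_n]$; conjugating by the constant diagonal $T=\operatorname{diag}(c,1)$ leaves $a,b$ untouched and absorbs $c$ into $a$, putting $\tilde H$ in the form of case~(2). If $\chr K=3$ and $\gamma:=\alpha_2=\beta_1$ vanishes we again land in case~(2); otherwise I rescale $a,b$ by $\gamma^{-1}$ (absorbing $\gamma^2$ into $c$) to obtain $a=x_2+\tilde a$, $b=x_1+\tilde b$ with $\tilde a,\tilde b\in K[x_3,\ldots,x_n]$, and then conjugate by $T=\operatorname{diag}(c^{-1},1)$ to clear the residual scalar; a direct chain-rule check verifies the gradient relation of case~(3). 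I expect the two delicate points to be the initial homogeneous degree reduction that pins $a,b$ down to linear forms, and the $\chr K=3$ bookkeeping, where $a,b$ genuinely involve $x_1,x_2$ and the substitution $x\mapsto Tx$ rescales them along with $c$.
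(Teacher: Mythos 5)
Your proposal is correct, and its core is a genuinely different (and cleaner) route than the paper's. Both arguments begin the same way: Lemma~\ref{2x2} plus the homogeneity degree count pins down $c\in K^*$ and $a,b$ as linear forms, the $K$-dependent case is dispatched by the second half of Lemma~\ref{2x2}, and both end by conjugating with a diagonal matrix to absorb $c$ (your choices $\operatorname{diag}(c,1)$ in case~(2) and $\operatorname{diag}(c^{-1},1)$ with the rescaled scalar in case~(3) do check out under the chain rule, exactly as you anticipate). The difference is the middle step. The paper first applies $T=\operatorname{diag}(c,1)$, then shows the coefficient of $x_2$ in $\tilde b$ (and of $x_1$ in $\tilde a$) vanishes by a contradiction argument involving the Euler relation, the auxiliary polynomial $\tilde H_1+\tfrac13 k_2^{-1}\tilde b^3$, and a separate discussion of $\chr K=2$, and only afterwards compares the coefficients of $x_1^2x_2$ in $\tilde H_1$ and $x_1x_2^2$ in $\tilde H_2$ to relate $\lambda$ and $\mu$. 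You instead impose the full integrability condition $\partial_{x_2}\partial_{x_1}H_i=\partial_{x_1}\partial_{x_2}H_i$ for $i=1,2$, which (since the coefficients $\alpha_j,\beta_j$ lie in $K$ and $a,b$ are $K$-independent) immediately yields $\alpha_1=\beta_2=0$, $\alpha_2=\beta_1$, and $3\alpha_2=0$ in one stroke, uniformly in all characteristics; in particular you get $\lambda=\mu$ for free, so no separate characteristic-$2$ discussion is needed, and the trichotomy (1)/(2)/(3) falls out directly. What the paper's longer argument buys is nothing extra here --- its coefficient comparisons are in effect special instances of the same symmetry of mixed partials --- so your plan is a legitimate simplification; the only items to write out carefully are the routine chain-rule verifications of the final normalizations, which you have correctly flagged.
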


\begin{proof}
Suppose that (1) does not hold.
By Lemma \ref{2x2}, there are $a,b,c \in K[x]$, such that
$$
\jac_{x_1,x_2} H = c \left( \begin{array}{cc}
ab & - b^2 \\ a^2 & -ab \end{array} \right)
$$
where $a$ and $b$ are linearly independent over $K$. As $H$ is cubic
homogeneous, the entries of $\jac_{x_1,x_2} H$ are quadratic homogeneous,
so $c \in K$ and $a$ and $b$ are independent linear forms.

If we take
$$
T = \left( \begin{array}{cc} c & 0 \\ 0 & 1 \end{array} \right)
\mbox{,} \quad \mbox{then} \quad
\jac_{x_1,x_2} \tilde{H} = \left( \begin{array}{cc}
\tilde{a}\tilde{b} & -\tilde{b}^2 \\ \tilde{a}^2 & -\tilde{a}\tilde{b} \end{array} \right) \mbox{,}
$$
where $\tilde{a} = c \cdot a|_{x_1=cx_1}$ and $\tilde{b} =c^{-1}\cdot b|_{x_1=cx_1}$.

We claim that the coefficient $k_2$ of $x_2$ in $\tilde{b}$ is $0$. Suppose conversely that $k_2\neq 0$.  Then
the coefficient of $x_2^3$ in
$$
3 \tilde{H}_1 = \jac_{x_1,x_2} \tilde{H}_1 \cdot \binom{x_1}{x_2}=\tilde{b}(x_1\tilde{a}-x_2\tilde{b})
$$
is nonzero. In particular, $\chr K\neq 3$. One may verify that
$$
\jac_{x_1,x_2} (\tilde{H}_1 + \tfrac13 k_2^{-1}\tilde{b}^3) = (\tilde{c}\tilde{b},~0)\mbox{,}
$$
where $\tilde{c} := \tilde{a} + k_2^{-1}\tilde{b} (\parder{}{x_1} \tilde{b})$.
As a consequence, $\parder{}{x_2} (\tilde{c}\tilde{b}) = \parder{}{x_1} 0 = 0$. Furthermore,
$\tilde{c}$ and $\tilde{b}$ are independent, just like $\tilde{a}$ and
$\tilde{b}$.
By $\parder{}{x_2} (\tilde{c}\tilde{b}) = 0$, we have $\tilde{c}\tilde{b} \in
K[x_1,x_3,x_4,\ldots,x_n]$ if $\textup{char} K\neq 2$. Since $\tilde{c}$
and $\tilde{b}$ are independent, we deduce that if $\textup{char} K= 2$ then $\tilde{c}\tilde{b} \in
K[x_1,x_3,x_4,\ldots,x_n]$  as well.
Since the coefficient $\lambda$ of $x_2$ in $\tilde{b}$ is nonzero, we have
$\tilde{c} = 0$,  a contradiction.

So the coefficient of $x_2$ in $\tilde{b}$ is $0$. Similarly, the
coefficient of $x_1$ in $\tilde{a}$ is $0$. Consequently,
$$
\jac_{x_1,x_2} \left( \begin{array}{c} \tilde{a} \\ \tilde{b} \end{array} \right)
=  \left( \begin{array}{cc} 0 & \lambda \\ \mu & 0 \end{array} \right)
\mbox{,}$$
where $\lambda, \mu \in K$.
Therefore $$
\jac_{x_1,x_2} \tilde{H} = \left(\begin{array}{cc}
(\lambda x_2+\cdots)(\mu x_1+\cdots) & -(\mu x_1+\cdots)^2 \\
(\lambda x_2+\cdots)^2 & - (\lambda x_2+\cdots)(\mu x_1+\cdots)
\end{array} \right)\mbox{.}
$$
So the coefficient of $x_1^2 x_2$ in $2\tilde{H}_1$ is equal to both
$\lambda\mu$ and $-2\mu^2$. Similarly, the coefficient of $x_1 x_2^2$ in
$2\tilde{H}_2$ is equal to both $\lambda\mu$ and $-2\lambda^2$.
It follows that either
$\lambda = \mu = 0$ or $0\neq \lambda = -2\mu = 4\lambda$. In the former case, $\widetilde{H}$ satisfies (2). In the latter
case, $\textup{char}K=3$ and $\lambda = \mu$. Replacing $\tilde{H}$ by
$\lambda \tilde{H}\big(\lambda^{-1}(x_1,x_2),x_3,x_4,\ldots,x_n\big)$, we have that $\widetilde{H}$  satisfies (3).
\end{proof}

\begin{theorem} \label{uporkle2}
Suppose that $\chr K\neq 2, 3$.
Let $H \in K[x]^n$ be  cubic homogeneous such that $x+H$ is a Keller map, i.e., $\jac H$ is nilpotent.
\begin{enumerate}[\upshape (i)]

\item
If $\rk \jac H = 1$, then there exists $T \in \GL_n(K)$ such that for
$\tilde{H} := T^{-1} H(Tx)$,
\begin{align*}
\tilde{H}_1 &\in K[x_2,x_3,x_4,\ldots,x_n]\mbox{,}  \\
\tilde{H}_2 &= \tilde{H}_3 = \tilde{H}_4 = \cdots = \tilde{H}_n = 0\mbox{.}
\end{align*}

\item
If $\rk \jac H = 2$, then either $H$ is linearly triangularizable or there exists  $T \in \GL_n(K)$ such that for
$\tilde{H} := T^{-1} H(Tx)$,
\begin{align*}
\tilde{H}_1 &- (x_1x_3x_4-x_2x_4^2) \in K[x_3,x_4,\ldots,x_n]\mbox{,} \\
\tilde{H}_2 &- (x_1x_3^2-x_2x_3x_4) \in K[x_3,x_4,\ldots,x_n]\mbox{,} \\
\tilde{H}_3 &= \tilde{H}_4 = \cdots = \tilde{H}_n = 0\mbox{.}
\end{align*}
\end{enumerate}
Furthermore, $x+tH$ is invertible over $K[t]$ if $\rk \jac H\leq 2$, where $t$ is a new variable.
Moreover, $x+tH$ is even tame over $K[t]$ if either $\rk \jac H=1$ or $\rk \jac H=2$ and $n\neq 4$.
In particular, $x + \lambda H$ is invertible and tame under the above condition respectively
for every $\lambda \in K$.
\end{theorem}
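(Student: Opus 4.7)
My plan is to combine Theorem~\ref{rkle2} with a case-by-case nilpotency analysis of $\jac \tilde H$, where $\tilde H := T^{-1} H(T x)$ is obtained in one of the three normal forms of Theorem~\ref{rkle2}. The goal is to show that the only non-linearly-triangularizable possibility is precisely the form in statement~(ii).

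Statement~(i) follows from case~(1) of Theorem~\ref{rkle2} with $r = 1$: only the $(1,1)$-entry of $\jac \tilde H$ can be nonzero, so nilpotency forces $\partial \tilde H_1/\partial x_1 = 0$ and hence $\tilde H_1 \in K[x_2, \ldots, x_n]$. For~(ii), I would address each case of Theorem~\ref{rkle2} with $r = 2$ in turn. In case~(1), a block computation shows $\jac \tilde H$ is nilpotent iff $A := \jac_{x_1, x_2}(\tilde H_1, \tilde H_2)$ is, and I apply Lemma~\ref{2x2nilp} to $(\tilde H_1, \tilde H_2)$: its case~(3) is ruled out by $\chr K \neq 3$, its case~(1) makes $\tilde H$ triangular (after swapping the first two indices), and its case~(2) yields $A = \bigl(\begin{smallmatrix} ab & -b^2 \\ a^2 & -ab \end{smallmatrix}\bigr)$ with $a, b \in K[x_3, \ldots, x_n]$ independent linear forms. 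A further linear change of coordinates on $x_3, \ldots, x_n$ sends $(a, b)$ to $(x_3, x_4)$, and integration of the resulting Jacobian yields exactly the form in~(ii), with remainder terms $g_1, g_2 \in K[x_3, \ldots, x_n]$. In case~(2) of Theorem~\ref{rkle2} (where $\tilde H \in K[x_1, x_2]^n$), applying Lemma~\ref{2x2nilp} to $(\tilde H_1, \tilde H_2)$ excludes its case~(2), since the entries $ab, a^2, b^2$ would lie in $K[x_1, x_2] \cap K[x_3, \ldots, x_n] = K$, contradicting $a, b$ independent linear forms; combined with a rank-$2$ argument in its case~(1), this forces $\tilde H_1 = \tilde H_2 = 0$, after which a permutation sending $\{x_1, x_2\}$ to the last two positions triangularizes the map. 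In case~(3), writing $\tilde H_i = x_3 q_i(x_1, x_2)$, the vanishing of $\tr \jac \tilde H$ forces $\tilde H_3 = 0$ and $\jac_{x_1, x_2}(q_1, q_2)$ nilpotent; a parametric analysis of $2 \times 2$ nilpotent matrices of linear forms, combined with $(\jac \tilde H)^2 = 0$, then shows that any nonzero $q_1$ or $q_2$ would force all $q_i$ to lie in $K \ell^2$ for a common linear form $\ell$, contradicting the three-dimensional span required by case~(3). Hence $\tilde H_1 = \tilde H_2 = \tilde H_3 = 0$ and a permutation of variables again triangularizes the map.

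For the final statements, invertibility of $x + tH$ is immediate whenever $H$ is linearly triangularizable, while in the Nagata-like case of~(ii) one has $\tilde F_i = x_i$ for $i \geq 3$, and the equations for $\tilde F_1, \tilde F_2$ form a $2 \times 2$ linear system in $(x_1, x_2)$ over $K[t, x_3, \ldots, x_n]$ whose coefficient matrix has determinant identically $1$, yielding a polynomial inverse. Tameness is immediate in the triangularizable cases. For the Nagata-like case with $n \geq 5$, I would peel off $g_1, g_2$ via the two elementary substitutions $x_1 \mapsto x_1 + t g_1(x_3, \ldots, x_n)$ and $x_2 \mapsto x_2 + t g_2(x_3, \ldots, x_n)$, reducing the problem to tameness of the core map $F_0 = \exp(tD)$ with $D = (x_1 x_3 - x_2 x_4)(x_4 \partial_{x_1} + x_3 \partial_{x_2})$ and $D^2 = 0$, and then use the extra variable $x_5$ (or higher) to exhibit $F_0$ as a Smith-style product of elementary maps. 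Constructing this explicit stable-tameness decomposition is the main obstacle, and the unavailability of an extra variable in dimension $n = 4$ is precisely why tameness is not claimed there.
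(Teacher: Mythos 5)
Your overall strategy is the paper's: run through the three normal forms of Theorem~\ref{rkle2}, reduce nilpotency of $\jac \tilde H$ to nilpotency of $\jac_{x_1,x_2}(\tilde H_1,\tilde H_2)$, apply Lemma~\ref{2x2nilp} (case (3) killed by $\chr K\neq 3$), and integrate case (2) of that lemma to get the form in (ii); that part, and your determinant-$1$ argument for invertibility, are correct. But your treatment of cases (2) and (3) of Theorem~\ref{rkle2} rests on claims that are false. In case (2), nilpotency plus $\rk\jac\tilde H=2$ does \emph{not} force $\tilde H_1=\tilde H_2=0$: the map $\tilde H=(0,x_1^3,x_2^3,0,\ldots,0)$ lies in $K[x_1,x_2]^n$, is cubic homogeneous with nilpotent Jacobian of rank $2$, and $\tilde H_2\neq 0$ survives any conjugation in $(x_1,x_2)$. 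In case (3), neither $(\jac\tilde H)^2=0$ nor your conclusion that a nonzero $q_1$ or $q_2$ forces all $q_i\in K\ell^2$ is true: $\tilde H=(x_3x_2^2,\,0,\,0,\,x_3x_1^2,\,x_3x_1x_2)$ has nilpotent Jacobian of rank $2$, its components span the full three-dimensional space $Kx_3x_1^2\oplus Kx_3x_1x_2\oplus Kx_3x_2^2$, yet $q_1=x_2^2\neq 0$ and $(\jac\tilde H)^2\neq 0$. The conclusions you want in these cases (linear triangularizability) are still true, but they need the paper's argument: if $\jac_{x_1,x_2}(\tilde H_1,\tilde H_2)$ is similar over $K$ to a triangular matrix then so is $\jac\tilde H$ in all three cases, and alternative (2) of Lemma~\ref{2x2nilp} is impossible in cases (2) and (3) because it would force the independent linear forms $a,b$ to lie in $K$ resp.\ $K[x_3]$.

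The second genuine gap is tameness. Peeling off $g_1,g_2$ by elementary maps and reducing to $F_0=(\exp(wD_0),x_5)$ with $D_0=x_4\partial_{x_1}+x_3\partial_{x_2}$ triangular and $w=t(x_1x_3-x_2x_4)\in\ker D_0$ is fine (though note $D^2\neq 0$ as an operator for $D=wD_0$; what matters is only local nilpotency), but you stop exactly where the work lies, calling the construction of the elementary factorization ``the main obstacle.'' That obstacle is precisely what the paper supplies via Lemma~\ref{smithlemma}, i.e.\ Smith's stable tameness theorem \cite{smith}, whose proof is inspected to see that a single extra variable suffices and that the factorization is by elementary maps over $K[t]$ rather than over $K(t)$. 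Without citing or reproving that result, the assertion that $x+tH$ is tame for $\rk\jac H=2$, $n\neq 4$ remains unproved in your write-up.
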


\begin{proof}
We may take $\tilde{H}$ as in (1), (2) or (3) of Theorem \ref{rkle2}.
If $\rk \jac H = 1$, then $\tilde{H}$ is as in (1) of Theorem \ref{rkle2}, i.e., $\tilde{H}_{i}= 0, 2\leq i\leq n$,
whence (i) holds because $\tr \jac \tilde{H} = 0$. So assume that $\rk \jac H = 2$. Notice that $\jac H$ is nilpotent.

If $\tilde{H}$ is as in (1) or (2) of Theorem \ref{rkle2}, i.e., $\tilde{H}_{i}= 0, 3\leq i\leq n$ or $\tilde{H} \in K[x_1,x_2]^n$, then $\jac_{x_1,x_2}(\tilde{H}_1,\tilde{H}_2)$ is nilpotent.

If $\tilde{H}$ is as in (3) of Theorem \ref{rkle2}, i.e., $K \tilde{H}_1 + K \tilde{H}_2 + \cdots + K \tilde{H}_n
= K x_3 x_1^2 \oplus K x_3 x_1 x_2 \oplus K x_3 x_2^2$, then $\tilde{H}_3 = 0$,
because $x_3^{-1} \tilde{H}_3$ is the constant part with respect to
$x_3$ of $\tr \jac \tilde{H} = 0$. So $\jac_{x_1,x_2}(\tilde{H}_1,\tilde{H}_2)$ is nilpotent in any case.

One may observe that, in all the cases (1), (2) and (3) of Theorem \ref{rkle2}, if $\jac_{x_1,x_2}(\tilde{H}_1,\tilde{H}_2)$ is similar over
$K$ to a triangular matrix, then $\jac \tilde{H}$ is
similar over $K$ to a triangular matrix, and so is
$\jac H$, and thus $H$ is linearly triangularizable.

Now suppose $\jac_{x_1,x_2}(\tilde{H}_1,\tilde{H}_2)$
is not similar over $K$ to a triangular matrix.  Noticing that $\textup{char}K\neq 2,3$, $\jac_{x_1,x_2}(\tilde{H}_1,\tilde{H}_2)$ must be as in (2) of Lemma \ref{2x2nilp}, i.e., $$
\jac_{x_1,x_2} \tilde{H} =
\left( \begin{array}{cc} ab & -b^2 \\ a^2 & -ab \end{array} \right)
~~ \mbox{ and } ~~
\jac_{x_1,x_2} \left( \begin{array}{c} a \\ b \end{array} \right)
=  \left( \begin{array}{cc} 0 & 0 \\ 0 & 0 \end{array} \right)\mbox{,}
$$
where $a,b$ are linearly independent linear forms.

If $\tilde{H}_1 \in
K[x_1,x_2,x_3]$, then $a,b\in k[x_3]$, a contradiction. So $\tilde{H}$ is not as in (2) or (3) of Theorem
\ref{rkle2}, and thus is as in (1) of Theorem
\ref{rkle2}, i.e.,
$\tilde{H}_3 = \tilde{H}_4 = \cdots = \tilde{H}_n = 0$. Consequently, by linear coordinate transformation, we may take $\tilde{H}$ such that $a = x_3$
and $b = x_4$. So (ii) holds.

For the last claim, when $\rk \jac H=1$, $\widetilde{H}$ is of the form in (i), whence $x+t\widetilde{H}$ is elementary and thus tame. When $\rk \jac H=2$, $\widetilde{H}$ is of the form in (ii),
and it suffices to show the following automorphism
$$
F=\big(x_1 + tx_4(x_3x_1-x_4x_2),x_2 + tx_3(x_3x_1-x_4x_2),x_3,x_4,x_5\big)
$$
is tame over $K[t]$.

For that purpose, let $w=t(x_3x_1-x_4x_2)$ and let $D:=x_4\partial_{x_1}+x_3\partial_{x_2}$ be a derivation of $K[t][x_1,x_2,x_3,x_4]$.
Observe that $D$ is triangular and $w\in \ker D$, and that $F=(\textup{exp} (wD), x_5).$ Therefore $F$ is tame over $K[t]$ due to the following Lemma \ref{smithlemma}.
\end{proof}

Recall that a derivation $D$ of $K[x]$ is called locally nilpotent if for every $f\in K[x]$ there exists an $m$ such that $D^{m}(f)=0$.
For such a derivation, $\textup{exp} D:=\sum_{i=0}^\infty \frac{1}{i!}D^i$ is a polynomial automorphism of $K[x]$.
A derivation $D$ of $K[x]$ is called triangular if $D(x_i)\in K[x_{i+1},\ldots,x_n]$ for $i=1,2,\ldots,n-1$ and $D(x_n)\in K.$ A triangular derivation is locally nilpotent.

\begin{lemma}\label{smithlemma}  Let $D$ be a triangular derivation of $K[t][x]$ and $w\in \ker D$ i.e. $D(w)=0$. Then
$(\textup{exp}(wD),x_{n+1})$ is tame over $K[t]$.
\end{lemma}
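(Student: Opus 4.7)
The plan is to exhibit $\alpha := \bigl(\textup{exp}(wD),\, x_{n+1}\bigr)$ as an explicit product of tame automorphisms of $K[t][x_1,\ldots,x_{n+1}]$, following the classical stably tame trick in which the extra variable is used to handle the exponential of a triangular derivation. First introduce two auxiliary maps. Let
$$
\sigma := (x_1,\ldots,x_n,\, x_{n+1}+w).
$$
Since $w\in K[t][x_1,\ldots,x_n]$ does not involve $x_{n+1}$, $\sigma$ is elementary over $K[t]$. Next extend $D$ to a derivation of $K[t,x_{n+1}][x_1,\ldots,x_n]$ by declaring $D(x_{n+1})=0$; then $x_{n+1}D$ is again triangular, in particular locally nilpotent, and the map
$$
\tau := \bigl(\textup{exp}(x_{n+1}D)(x_1),\ldots,\textup{exp}(x_{n+1}D)(x_n),\, x_{n+1}\bigr)
$$
is an automorphism of $K[t][x_1,\ldots,x_{n+1}]$. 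Because $D$ is triangular one checks that $\textup{exp}(x_{n+1}D)(x_i)-x_i \in K[t,\,x_{i+1},\ldots,x_n,\,x_{n+1}]$, so $\tau$ is itself a triangular automorphism of $K[t][x_1,\ldots,x_{n+1}]$ and therefore factors as a finite composition of elementary moves, one per variable, hence is tame over $K[t]$.

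The main step is then the factorization
$$
\alpha \;=\; \tau^{-1}\,\sigma\,\tau\,\sigma^{-1}.
$$
Since $w$ and $x_{n+1}$ both lie in $\ker D$, the operators $wD$ and $x_{n+1}D$ commute, so on $K[t,x_{n+1}][x_1,\ldots,x_n]$ we have the identity $\textup{exp}\bigl((x_{n+1}+w)D\bigr)=\textup{exp}(x_{n+1}D)\,\textup{exp}(wD)$. For each $i\le n$, computing $\sigma\tau\sigma^{-1}(x_i)$ amounts to substituting $x_{n+1}\mapsto x_{n+1}+w$ into $\textup{exp}(x_{n+1}D)(x_i)$, which yields $\textup{exp}((x_{n+1}+w)D)(x_i) = \tau\bigl(\textup{exp}(wD)(x_i)\bigr)$; applying $\tau^{-1}$ then gives $\textup{exp}(wD)(x_i) = \alpha(x_i)$. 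For the last coordinate, $\tau(w)=\textup{exp}(x_{n+1}D)(w)=w$ because $D(w)=0$, so the two shifts by $w$ coming from $\sigma$ and $\sigma^{-1}$ cancel and we recover $x_{n+1}=\alpha(x_{n+1})$. Tameness of $\alpha$ then follows from that of $\sigma$ and $\tau$.

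The step I expect to require the most care is the tameness of $\tau$: this is standard, but one must verify that the triangular automorphism $\textup{exp}(x_{n+1}D)$ decomposes into elementary maps one variable at a time, each stage shifting some $x_i$ by a polynomial in $x_{i+1},\ldots,x_n,x_{n+1}$ that is independent of $x_i$. The commutation identity $[wD,\,x_{n+1}D]=0$ also deserves attention, as it is the structural reason the factorization works; it rests on $D(w)=D(x_{n+1})=0$ together with the Leibniz rule for $D$.
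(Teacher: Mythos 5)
Your proof is correct and is essentially the paper's argument made explicit: the paper simply cites Smith's Corollary and asks the reader to inspect its proof, and that proof is precisely your commutator factorization $(\exp(wD),x_{n+1})=\tau^{-1}\sigma\tau\sigma^{-1}$ with $\sigma=(x_1,\ldots,x_n,x_{n+1}+w)$ elementary and $\tau=\exp(x_{n+1}D)$ a triangular (hence tame) automorphism. So you take the same route, with the added value that your write-up is self-contained over $K[t]$ with a single extra variable, which is exactly what the paper's brief ``inspecting the proof'' remark is meant to deliver.
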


\begin{proof} From \cite[Corollary]{smith}, it follows that 
there exists a $k$ such that $(\exp(wD),\allowbreak x_{n+1},x_{n+2},
\ldots,x_{n+k})$ is tame over $K(t)$. Inspecting the 
proof of \cite[Corollary]{smith} yields that $(\exp(wD),x_{n+1})$ is tame over $K[t]$. 
\end{proof}

\paragraph{Acknowledgments} 
The first author has been supported by the Netherlands Organisation of Scientific research (NWO).
The second author has been partially supported by the NSF of China (grant no. 11771176 and 11601146).


\end{document}